\pgfplotsset{graph default/.append style={axis x line=middle, axis y line= middle, xlabel={$u$}, ylabel={$w$}, x axis line style=-, y axis line style=-}}
\newcommand{\bb}{\mathbb}
\newcommand{\CC}{\bb C}
\newcommand{\ZZ}{\bb Z}
\newcommand{\RR}{\bb R}
\newcommand{\A}{\bb A}
\newcommand{\eL}{\mathcal{L}}
\newcommand{\Ga}{\Gamma}
\newcommand*\wrt{\mathop{}\!\mathrm{d}}
\newcommand \Wrt[1]{\mathop{}\!\mathrm{d^#1}}
\renewcommand{\Im}{\operatorname{Im}}
\renewcommand{\Re}{\operatorname{Re}}
\newtheorem{Theorem}{Theorem}
\numberwithin{Theorem}{section}
\newtheorem{theo}[Theorem]{Theorem}
\newtheorem{prop}[Theorem]{Proposition}
\newtheorem*{lemma*}{Lemma}
\newtheorem*{question*}{Question}
\newtheorem*{theorem*}{Theorem}
\theoremstyle{remark}
\newtheorem{rema}[Theorem]{\sc Remark}
\newtheorem*{rema*}{\sc Remark}
\numberwithin{equation}{section}
\numberwithin{figure}{section}
\begin{document}
\title{An asymptotic for the $K$-Bessel function using the saddle-point method}
\author{Jimmy Tseng}

\address{Department of Mathematics, University of Exeter, Exeter, EX4 4QF, UK}
\email{j.tseng@exeter.ac.uk}
 \keywords{Bessel functions, asymptotic expansions}
 \subjclass[2020]{41A60, 33C10}
 \thanks{The author was supported by EPSRC grant EP/T005130/1.}
\begin{abstract}  Using the saddle-point method, we compute an asymptotic, as $y \rightarrow \infty$, for the $K$-Bessel function $K_{r  + i t}(y)$ with positive, real argument $y$ and of large complex order $r+it$ where $r$ is bounded and $t = y \sin \theta$ for a fixed parameter $0\leq \theta\leq \pi/2$ or $t= y \cosh \mu$ for a fixed parameter $\mu>0$.  Our method gives an illustrative proof, using elementary tools, of this known result and explains how these asymptotics come about.

As part of our proof, we prove a new result, namely a novel integral representation for $K_{r  + i t}(y)$ in the case $t= y \cosh \mu$.  This integral representation involves only one saddle point.

\end{abstract}

\maketitle
\tableofcontents

\section {Introduction}\label{secBackground}

Bessel functions are solutions to certain second-order differential equations.  There are a number of variants and they have applications in physics (and other sciences), engineering, statistics, and within mathematics itself (for but a few examples, see~\cite{Boc1892, Con07, DRS10, Kor02, Rob90,SF75, Wa}).  The $K$-Bessel function (see (\ref{eqnKInTermsOfI}) for the definition) is one of these variants and it, in particular, has important applications in analytic number theory and ergodic theory, especially as it appears in the Fourier expansion of eigenforms of the non-Euclidean Laplacian, such as Hecke-Maass forms and Eisenstein series, (see~\cite[Chapter~3]{Iwa02} and~\cite{Zha10} for example).  Understanding the $K$-Bessel function and, in particular, its asymptotics will have applications in science and mathematics such as, for example, computing bounds on the Eisenstein series~\cite[Theorem~1.12]{Ts18} (see also~\cite{St}).

 In this paper, we give a proof of these asymptotics, first proved in~\cite[Theorems~1.1 and~1.3]{Ts18}.  The original proof uses Laplace's method, which is a powerful tool but does not yield much insight.  Our proof, unlike that in~\cite{Ts18}, explains, using elementary tools, how these asymptotics come about.  We will find a suitable integral representation of the $K$-Bessel function and the relevant saddle point and will show that the integral is dominated by its restriction to a small (suitable) neighborhood of the saddle point and is negligible outside of this neighborhood.  In addition, our proof also yields a novel integral representation for the $K$-Bessel function.  This integral representation is a new result and may be of independent interest.

\subsection{Statement of results}

Let $\nu := r +it$.  The asymptotics of the $K$-Bessel function that we compute using our illustrative method are stated for the two cases $y \geq t \geq 0$ and $0 < y < t$ in Theorems~\ref{thmFirstCaseMonoKBessel} and~\ref{thmSecondCaseOscillKBessel}, respectively, and were first proved in~\cite{Ts18} (\cite[Theorems~1.1~and~1.3]{Ts18}, respectively).  When $t <0$, see Remark~\ref{rmkNegativeTIsOK}.  Note that $\Gamma(\cdot)$ is the gamma function.  The definitions of the symbols $\sim$, $o(\cdot)$, and $O(\cdot)$ can be found in~\cite[Chapter 1, Section 2]{Olv}.  


\begin{theo}\label{thmFirstCaseMonoKBessel}
Let $M\geq0$ and $0\leq\theta \leq \pi/2$ be fixed real numbers.  Let $|r|\leq M$, $0< y \in \RR$, and \begin{align}
t  = y \sin \theta.\end{align}  Then \begin{align*}
K_\nu(y) \sim \begin{cases} \sqrt{\frac{\pi}{2 y \cos \theta}}e^{-y (\cos \theta +\theta \sin \theta )}e^{ir\theta} & \text{ if } 0 \leq \theta < \frac \pi 2 \\ e^{-\frac \pi 2 y +i\frac \pi 2 r}y^{-1/3}\frac{\Gamma(\frac 1 3)} {2^{\frac 2 3}3^{\frac 1 6}} & \text{ if } \theta = \frac \pi 2\end{cases}  \end{align*} as $y \rightarrow \infty$.
\end{theo}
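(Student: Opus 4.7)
The plan is to apply the saddle-point (steepest descent) method to the classical integral representation
\[
K_\nu(y) = \frac{1}{2}\int_{-\infty}^{\infty} e^{\nu v - y\cosh v}\, dv,
\]
valid for $y>0$ and any complex $\nu$.  Writing $\nu = r+it$ and $t = y\sin\theta$, the exponent becomes $y\,\Phi(v) + rv$ where $\Phi(v) := i\sin\theta\cdot v - \cosh v$.  The saddle-point equation $\Phi'(v)=0$ reads $\sinh v = i\sin\theta$, whose natural solution in the strip $0\le\Im v\le\pi/2$ is $v_0 = i\theta$, at which $\Phi(v_0) = -(\cos\theta + \theta\sin\theta)$.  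This already accounts for the leading exponential factor, while the piece $rv_0 = ir\theta$ contributes the $e^{ir\theta}$ in the answer.

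First I would deform the contour from $\RR$ to the horizontal line $\Im v = \theta$ by Cauchy's theorem; since $|e^{\nu v - y\cosh v}|$ decays super-exponentially as $|\Re v|\to\infty$ inside the strip $0\le\Im v\le\pi/2$, there is no boundary contribution.  Parameterising the shifted contour by $v = i\theta + s$, $s\in\RR$, the exponent expands as
\[
ir\theta + rs - y(\theta\sin\theta + \cos\theta) - \tfrac12 y\cos\theta\cdot s^2 - \tfrac{i}{6}y\sin\theta\cdot s^3 + O(ys^4).
\]
In the regular regime $0\le\theta<\pi/2$ the real part $-\tfrac12 y\cos\theta\cdot s^2$ drives the localisation.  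I would split the integral at $|s|\le\delta_y := y^{-2/5}$ (any $\delta_y$ with $y^{-1/2}\ll\delta_y\ll 1$ will do): outside the window, uniform bounds on $\cosh s-1$ show the contribution is exponentially smaller than the main term; inside the window, I replace $\cosh s$ by $1+s^2/2$ up to a controlled cubic error and $e^{rs}$ by $1+O(s)$, leaving a Gaussian whose integral equals $\sqrt{2\pi/(y\cos\theta)}$.  Multiplying by the saddle prefactor $\tfrac12 e^{-y(\cos\theta+\theta\sin\theta)}e^{ir\theta}$ produces the first case of the theorem.

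The case $\theta=\pi/2$ is the main obstacle, because $\cos\theta$ vanishes and the saddle at $v_0 = i\pi/2$ is degenerate, so I have to keep the cubic term in $\Phi$.  On the contour $\Im v = \pi/2$ the exponent simplifies to $-\tfrac{\pi}{2}y + \tfrac{i\pi}{2}r + rs - \tfrac{i}{6}y s^3 + O(ys^5)$.  Rescaling $s = u\,y^{-1/3}$ turns the leading part of the integrand into $e^{-iu^3/6}$, and a standard contour rotation of the two half-lines by $\mp\pi/6$ into the wedges where $\Re(-iu^3/6)<0$ evaluates
\[
\int_{-\infty}^{\infty} e^{-iu^3/6}\, du \;=\; \frac{6^{1/3}\,\Gamma(1/3)}{\sqrt 3}.
\]
Combined with the $y^{-1/3}$ Jacobian, the factor $\tfrac12$ from the representation, and the identity $6^{1/3}/(2\sqrt 3) = 1/(2^{2/3}3^{1/6})$, this gives the stated degenerate asymptotic.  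Here the off-saddle estimate cannot rely on a negative quadratic and instead requires a local deformation off $\Im v=\pi/2$ into the same $\pm\pi/3$-wedges of steepest descent.

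The main technical hurdle I anticipate is the uniform packaging of the off-saddle bound, the cutoff $\delta_y$, and the Taylor remainders so that the conclusion $K_\nu(y)\sim\cdots$ holds as $y\to\infty$ for every fixed $\theta\in[0,\pi/2]$, with the degenerate endpoint handled by the separate cubic analysis.  Uniformity in $r$ is cheap since $|r|\le M$ and the factor $e^{rs}$ is benign on both windows $|s|\lesssim y^{-2/5}$ and $|s|\lesssim y^{-1/3}$.
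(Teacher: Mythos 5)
Your treatment of the non-degenerate range $0\le\theta<\pi/2$ is sound and is a mild variant of the paper's argument: where the paper deforms onto the exact steepest-descent curve $w=\arcsin\bigl(\sin\theta\,\tfrac{u}{\sinh u}\bigr)$, you use the horizontal line $\Im v=\theta$ through the saddle. That is legitimate and arguably simpler, since on that line $\Re(-y\cosh v)=-y\cos\theta\cosh s$ still supplies a genuine negative quadratic (indeed super-exponential) decay away from $s=0$, the window $y^{-1/2}\ll\delta_y\ll y^{-1/3}$ kills the cubic phase, and the Gaussian computation gives the stated constant; the price is that the phase is not constant along your contour, but you never need it to be. The $\theta=0$ case and the uniformity in $|r|\le M$ are handled correctly.

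The gap is in the degenerate case $\theta=\pi/2$, and it is precisely in the step you pass over quickly: the deformation to the line $\Im v=\pi/2$. Your justification (``$|e^{\nu v-y\cosh v}|$ decays super-exponentially as $|\Re v|\to\infty$ inside the strip $0\le\Im v\le\pi/2$'') fails on the top edge of the strip: there $\cosh(s+i\pi/2)=i\sinh s$ is purely imaginary, so $|e^{-y\cosh v+\nu v}|=e^{-\pi y/2+rs}$, with no decay at all from the Bessel kernel. Consequently the connecting vertical segments at $\Re v=\pm T$ do not vanish for $|r|\ge 1$, and the intermediate integral along $\Im v=\pi/2$ is not even conditionally convergent there (substituting $x=\sinh s$ the integrand behaves like $(2x)^{r-1}e^{-iyx}$), whereas the theorem allows arbitrary bounded $r$. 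The local wedge analysis at the saddle is correct --- the rotation by $\mp\pi/6$, the value $\int e^{-iu^3/6}\,du=6^{1/3}\Gamma(1/3)/\sqrt3$, and the resulting constant $2^{-2/3}3^{-1/6}\Gamma(1/3)$ all check out --- but you must deform $\RR$ \emph{directly} onto a single global contour that leaves the saddle in those wedge directions and then bends back toward the real axis, so that $\cos(\Im v)>0$ away from the saddle restores exponential decay. This is exactly what the paper does: its path $w=\arcsin(u/\sinh u)$ has tangent slopes $\mp 1/\sqrt3$ (i.e.\ angles $\mp\pi/6$) at the order-two saddle $i\pi/2$ and tends to the real axis as $u\to\pm\infty$, and the off-saddle bound is run on that curve via the contours $C$ and $D$. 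Your outline needs this global contour (and the accompanying tail estimate replacing the absent negative quadratic) spelled out before the $\theta=\pi/2$ case can be considered proved.
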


%

\begin{theo}\label{thmSecondCaseOscillKBessel}
Let $M \geq0$ and $\mu > 0$ be fixed real numbers.  Let $|r|\leq M$, $0< y \in \RR$, and \begin{align}
t  = y \cosh \mu.\end{align}  Then \begin{align*}
K_\nu(y) = \sqrt{\frac{2\pi}{y \sinh \mu}}e^{-y \frac \pi 2 \cosh \mu +i r \frac \pi 2}& \left[\cosh(r \mu) \sin\left(\frac \pi 4 - y \left(\sinh \mu  - \mu \cosh \mu \right)\right) \right. \\ &\left.\quad  -i \sinh(r \mu)\cos\left(\frac \pi 4 - y \left(\sinh \mu  - \mu \cosh \mu \right)\right)\right] \left(1 +o(1) \right)
\end{align*} as $y \rightarrow \infty$.
\end{theo}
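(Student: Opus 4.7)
The plan is to start from the classical representation
\[
K_\nu(y) = \tfrac12\int_{-\infty}^{\infty} e^{-y\cosh u+\nu u}\,du,
\]
shift the contour vertically by $i\pi/2$ (using $\cosh(v+i\pi/2)=i\sinh v$), and work with
\[
K_\nu(y) = \tfrac12\,e^{i\nu\pi/2}\int_{-\infty}^{\infty} e^{-iy\sinh v+\nu v}\,dv.
\]
Substituting $\nu=r+iy\cosh\mu$ and treating the factor $e^{rv}$ as a slowly varying amplitude, the dominant exponent becomes $iy\Phi(v)$ with $\Phi(v):=\cosh\mu\cdot v-\sinh v$, whose stationary points are the real numbers $v=\pm\mu$. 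A direct computation yields $\Phi''(\pm\mu)=\mp\sinh\mu$ and $\Phi(\pm\mu)=\pm(\mu\cosh\mu-\sinh\mu)$. An alternative starting point is to split $\cosh(\nu u)=\tfrac12(e^{\nu u}+e^{-\nu u})$ in the unshifted representation, producing two integrals on $(0,\infty)$ each with a \emph{single} complex saddle at $\mu\pm i\pi/2$; this is plausibly the novel single-saddle representation mentioned in the abstract.

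The substance of the proof is the rigorous saddle-point analysis at each of the two stationary points. For each $v=\pm\mu$ I fix a quantitative neighborhood $|v\mp\mu|\leq\delta$ and show: (a) on this neighborhood the Taylor expansion $\Phi(v)\approx\Phi(\pm\mu)+\tfrac12\Phi''(\pm\mu)(v\mp\mu)^2$ reduces the integral to a Gaussian with a multiplicative error $1+o(1)$, yielding the standard factor $\sqrt{2\pi/(y\sinh\mu)}\,e^{\mp i\pi/4}$ from $\int_{\RR}e^{\mp iax^2/2}\,dx$ with $a=y\sinh\mu$; (b) the tail contribution $|v\mp\mu|>\delta$ is of strictly smaller order. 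Because the exponent oscillates rather than decays on the real axis, part (b) is the principal obstacle: it requires either a van der Corput / integration-by-parts estimate, or a local deformation of the contour into the upper half-plane (where $-iy\sinh v$ acquires a negative real part away from the saddles) to produce genuine exponential decay. The hypothesis $|r|\leq M$ keeps the perturbation $e^{rv}$ harmless on the relevant neighborhoods.

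Collecting the two leading-order contributions and multiplying by the prefactor $\tfrac12\,e^{i\nu\pi/2}=\tfrac12\,e^{-y\pi\cosh\mu/2}e^{ir\pi/2}$, I obtain
\[
K_\nu(y) \sim \sqrt{\frac{2\pi}{y\sinh\mu}}\,e^{-y\pi\cosh\mu/2+ir\pi/2}\,\cosh\!\bigl(r\mu + i[\,y(\mu\cosh\mu-\sinh\mu)-\tfrac\pi4\,]\bigr).
\]
Expanding the $\cosh$ and using the elementary identities $\cos(\theta-\tfrac\pi4)=\sin(\tfrac\pi4+\theta)$ and $\sin(\theta-\tfrac\pi4)=-\cos(\tfrac\pi4+\theta)$ with $\theta=y(\mu\cosh\mu-\sinh\mu)=-y(\sinh\mu-\mu\cosh\mu)$ converts this into
\[
\cosh(r\mu)\sin\!\bigl(\tfrac\pi4-y(\sinh\mu-\mu\cosh\mu)\bigr)\,-\,i\sinh(r\mu)\cos\!\bigl(\tfrac\pi4-y(\sinh\mu-\mu\cosh\mu)\bigr),
\]
which matches the statement of Theorem~\ref{thmSecondCaseOscillKBessel} exactly.
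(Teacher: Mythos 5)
Your overall strategy---stationary phase on the horizontal line $\Im R=\pi/2$ rather than the paper's steepest-descent analysis---is a legitimate alternative in spirit, and your bookkeeping checks out: the stationary points $v=\pm\mu$, the values $\Phi(\pm\mu)=\pm(\mu\cosh\mu-\sinh\mu)$ and $\Phi''(\pm\mu)=\mp\sinh\mu$, the Fresnel factors $\sqrt{2\pi/(y\sinh\mu)}\,e^{\mp i\pi/4}$, and the final trigonometric reduction reproduce the theorem's leading term exactly. But there are two genuine gaps. First, the representation
\[
K_\nu(y)=\tfrac12 e^{i\nu\pi/2}\int_{-\infty}^{\infty}e^{-iy\sinh v+\nu v}\,dv
\]
is not valid for all $|r|\le M$: on this line the integrand has modulus $e^{rv}$, and substituting $s=\sinh v$ shows the tail behaves like $\int^{\infty}e^{-iys}s^{\,r-1}\,ds$, which diverges even as an improper integral once $r\ge 1$; correspondingly, the vertical segments needed to justify the contour shift do not vanish as $|u|\to\infty$. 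So the contour cannot be pushed all the way to height $\pi/2$ at its ends. Second, your proposed cure for the tails---deforming ``into the upper half-plane''---goes the wrong way precisely where it is needed. Writing $v=x+i\eta$, the real part of the full exponent is $y\cosh x\,\sin\eta+rx-t\eta\approx y\,\eta\,(\cosh x-\cosh\mu)+rx$ for small $\eta$; this is negative with $\eta>0$ only in the middle region $|x|<\mu$, whereas on the outer tails $|x|>\mu$ you must bend the contour \emph{downward}, toward the real axis. Both defects are repaired at once by a contour that rises above $\Im R=\pi/2$ for $|u|<\mu$ and dips below it for $|u|>\mu$---which is essentially the steepest-descent path the paper uses, so the repair pushes you back toward the paper's argument.

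For comparison, the paper does not simply run a local analysis on the steepest-descent contour either: that contour has vertical branches running to $+i\infty$ containing infinitely many saddle points $\pm\mu+i(\pi/2+2k\pi)$, so the paper first uses the $2\pi i$-periodicity of the integrand and a geometric series to collapse those branches to a finite $w$-interval (Proposition 3.1), then shifts a piece down by $2\pi$ to obtain the single-saddle representation of Theorem 1.3, on which the Gaussian approximation at $R_0^+=\mu+i\pi/2$ and the Cauchy--Goursat tail estimates are carried out explicitly. Your guessed ``alternative starting point'' (splitting $\cosh(\nu u)$) is not what the paper does, though it is in the same spirit of isolating one saddle. In short: your leading-order computation is correct and the route is viable, but the convergence of your starting representation for $|r|\ge 1$ and the direction of the tail deformation are real obstructions that your sketch does not overcome.
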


\noindent For a comparison of Theorems~\ref{thmFirstCaseMonoKBessel} and~\ref{thmSecondCaseOscillKBessel} with related results in the literature, see~\cite[Remarks~1.2 and~1.4]{Ts18}.  In~\cite[Remark~1.4]{Ts18}, note that $=$ should replace $\sim$ and that the right-hand side of the displayed expression should be multiplied by $(1 + o(1))$.  Also, there are some related uniform results~\cite{Bal66, Ts18}.  See~\cite[Section~1 and Remark~1.7]{Ts18} for the definition of {\em uniform} and a discussion.

Finally, as part of our proof of Theorem~\ref{thmSecondCaseOscillKBessel}, we will show an integral representation of $K_\nu(y)$ that involves only one saddle point, namely the saddle point $R^+_0:=\mu+i \frac \pi 2$ (see Section~\ref{subsecSecondCaseYLessT} for the definition of saddle point and the values of the other saddle points).  The integrals of this integral representation are over the contour $w(u)$ given by (\ref{eqnPathSteepDesCase2}) and shown in Figure~\ref{fig:PathNovelIntRep}.  This integral representation is given in Theorem~\ref{propTemmOscIntRepKBessel2} and is a new result.

\begin{theo}  \label{propTemmOscIntRepKBessel2}  Let $\mu > 0$ be a fixed real number, $0< y \in \RR$, and $t  = y \cosh \mu$.
 Then we have the following integral representation:
 
\begin{align}\label{eqnTemmOscIntRepKBessel2} K_\nu(y) =&\frac 1 2 e^{i \chi}  \int_{\mu_-}^{\infty} e^{-y \psi(u)} e^{-ru}e^{irw}\left(1-i \frac{\wrt w}{\wrt u}\right) ~\wrt u \\\nonumber +&\frac 1 2 e^{i \chi}  \frac{e^{-2 \pi t + i 2 \pi r}}{1-e^{-2 \pi t + i 2 \pi r}}\int_{-\frac \pi 2}^{\frac 3 2 \pi } e^{-y \psi(u)} e^{-ru}e^{irw}\left(- \frac{\wrt u}{\wrt w}+i\right)~\wrt w
\\ \nonumber +&\frac 1 2 e^{-i\chi}  \int_{\mu_-}^{\infty} e^{-y \psi(u)} e^{ru}e^{irw}\left(1+i \frac{\wrt w}{\wrt u}\right) ~\wrt u 
\\ \nonumber -&\frac 1 2 e^{-i\chi}  \frac{e^{-2 \pi t + i 2 \pi r}}{1-e^{-2 \pi t + i 2 \pi r}}\int_{-\frac {\pi}2}^{\frac 3 2 \pi }e^{-y \psi(u)} e^{ru}e^{irw}\left(\frac{\wrt u}{\wrt w} +i\right)~\wrt w
\end{align} where $\psi(u) := \cosh u \cos w +w \cosh \mu$ and $\chi :=  \sqrt{t^2 - y^2}  - t \cosh^{-1}\left(\frac t y\right)$.
\end{theo}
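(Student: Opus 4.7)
My plan is to begin from the two classical integral representations
\begin{align*}
K_\nu(y)=\frac{1}{2}\int_{-\infty}^{\infty}e^{-y\cosh\zeta-\nu\zeta}\,\wrt\zeta=\frac{1}{2}\int_{-\infty}^{\infty}e^{-y\cosh\zeta+\nu\zeta}\,\wrt\zeta,
\end{align*}
the second of which follows from $K_\nu=K_{-\nu}$, and to write
\begin{align*}
K_\nu(y)=\tfrac{1}{4}\int_{-\infty}^{\infty}e^{-y\cosh\zeta-\nu\zeta}\,\wrt\zeta+\tfrac{1}{4}\int_{-\infty}^{\infty}e^{-y\cosh\zeta+\nu\zeta}\,\wrt\zeta.
\end{align*}
I expect the first summand to produce the two integrals in \eqref{eqnTemmOscIntRepKBessel2} carrying $e^{-ru}$, and the second summand to produce the two carrying $e^{+ru}$.

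For each summand I would introduce real coordinates via $\zeta=u\mp iw$ (top sign for the first summand, bottom for the second), so that in both cases the relevant saddle of $-y\cosh\zeta\mp\nu\zeta$ (obtained from $\sinh\zeta=\mp\nu/y=\mp i\cosh\mu$ when $r=0$) lands at $R_0^+=(\mu,\pi/2)$ in $(u,w)$-coordinates. A direct calculation then shows that on the steepest descent curve $w=w(u)$ through $R_0^+$, along which the imaginary part of $-y\cosh\zeta\mp it\zeta$ is the constant $\pm\chi=\pm y(\sinh\mu-\mu\cosh\mu)$, the contour integrand factors as
\begin{align*}
e^{-y\cosh\zeta\mp\nu\zeta}\,\wrt\zeta=e^{\pm i\chi}\,e^{-y\psi(u)}\,e^{\mp ru}\,e^{irw}\,\bigl(1\mp i\tfrac{\wrt w}{\wrt u}\bigr)\wrt u,
\end{align*}
with $\psi(u)=\cosh u\cos w+w\cosh\mu$ as in the statement. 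Parametrizing the same contour by $w$ instead of $u$ replaces $(1\mp i\,\wrt w/\wrt u)\,\wrt u$ by $(\mp\wrt u/\wrt w+i)\,\wrt w$, which accounts for the factors in the second and fourth integrals of \eqref{eqnTemmOscIntRepKBessel2}.

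Next I would deform each real-line contour onto a closed contour consisting of a piece of $w(u)$ running from a finite abscissa $u=\mu_-$ out to $u=+\infty$ (yielding the two $\wrt u$-integrals) together with a vertical closing segment along which $w$ traverses $[-\pi/2,3\pi/2]$ (yielding the two $\wrt w$-integrals). The main obstacle is that one cannot simply close the contour in a rectangle at $\pm\infty$: since $|e^{-y\cosh\zeta}|=e^{-y\cosh u\cos w}$ grows exponentially in $|u|$ whenever $\cos w<0$, the horizontal connectors at infinity do not vanish inside the bad strips. The key idea to handle this is the $2\pi i$-periodicity of $\cosh\zeta$: the shift $\zeta\mapsto\zeta+2\pi i$ multiplies $e^{-y\cosh\zeta\pm\nu\zeta}$ by $e^{\pm 2\pi i\nu}=e^{\mp 2\pi t\pm 2\pi ir}$, which is exponentially small when the shift direction is chosen so that the saddle sits in the correct half-plane (upward for $+\nu\zeta$, downward for $-\nu\zeta$). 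Applying Cauchy's theorem iteratively to the stack of shifted copies of the deformed contour and summing the resulting geometric series $\sum_{k=1}^{\infty}(e^{-2\pi t+2\pi ir})^{k}=\tfrac{e^{-2\pi t+2\pi ir}}{1-e^{-2\pi t+2\pi ir}}$ produces exactly the prefactor that multiplies the closing-segment integrals in \eqref{eqnTemmOscIntRepKBessel2}.

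The most delicate technical point will be verifying that the horizontal connectors between consecutive shifted copies of the steepest descent curve, as well as those linking the original real axis to the first copy, either vanish at $\pm\infty$ inside the good strips $\cos w>0$ or are cleanly absorbed into the geometric series; one must also pin down the value $\mu_-$ and confirm that the closing segment meets $w(u)$ with $w$-endpoints exactly $-\pi/2$ and $3\pi/2$, in agreement with the geometry of Figure~\ref{fig:PathNovelIntRep}. Once these points are established, collecting the four contributions (one steepest descent and one closing-segment integral per summand) assembles into the right-hand side of \eqref{eqnTemmOscIntRepKBessel2}.
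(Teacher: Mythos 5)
Your computation of how the integrand factors on the constant-phase curve through $R_0^+$ is correct, and you have correctly identified the quasi-periodicity $e^{2\pi i\nu}=e^{-2\pi t+2\pi i r}$ as the source of the geometric-series prefactor $\frac{q}{1-q}$. However, the organizing step of your plan fails as stated. You claim that the summand $\frac14\int_{\RR}e^{-y\cosh\zeta-\nu\zeta}\,\wrt\zeta$ produces only the two $e^{-ru}$ integrals and the other summand only the two $e^{+ru}$ integrals, by deforming each copy of the real line onto a contour running from the finite abscissa $u=\mu_-$ out to $u=+\infty$ plus a closing piece. But the real axis has ends at $u=-\infty$ and $u=+\infty$, so any admissible deformation must retain a branch going out to $u=-\infty$; for either summand that left branch is precisely what generates the opposite-sign exponential. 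Hence each summand contributes half of all four lines of \eqref{eqnTemmOscIntRepKBessel2}, the averaging of the two classical representations gains nothing, and the contour you describe (with a finite left endpoint) is not a deformation of $\RR$ at all. The mechanism that actually produces the two signs is different: one keeps a single representation, deforms onto the two-branch steepest-descent path $\eL^-\cup\eL^+$ (running from $-\infty$ up to $+i\infty$ and back down to $+\infty$, the passage near $+i\infty$ being harmless because the integrand decays like $e^{-tw}$), and then folds $\eL^-$ onto $\eL^+$ via $u\mapsto-u$, using the evenness $w(-u)=w(u)$ and $\psi(-u)=\psi(u)$; it is this reflection that converts $e^{ru}$ into $e^{-ru}$ and flips the sign of $\wrt w/\wrt u$. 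You need this folding step (or an equivalent), and once you have it your two-representation split is redundant.

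Two further points where your contour does not match the statement. First, the closing piece is not a vertical segment: the $\wrt w$-integrals in \eqref{eqnTemmOscIntRepKBessel2} carry the factor $\mp\frac{\wrt u}{\wrt w}+i$, which is nontrivial only because $u$ varies along that piece. It is the curved arc $\eL^0\cup\{\text{part of }\eL^+\text{ with }\pi/2\le w\le 3\pi/2\}$ joining $(\mu_-,-\pi/2)$ to $(\mu_-,3\pi/2)$ through the saddle, where $\eL^0$ is the $w\in[3\pi/2,5\pi/2]$ piece of $\eL^+$ shifted down by $2\pi$ (whence one factor of $q=e^{-2\pi t+i2\pi r}$). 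A literal vertical segment at $u=\mu_-$ would, by Cauchy--Goursat, give a valid but \emph{different} identity from the one you are asked to prove. Second, the geometric series most directly yields $\frac1{1-q}\int_{\pi/2}^{5\pi/2}$; to reach the stated form you must split off the $[\pi/2,3\pi/2]$ portion, convert it to a $\wrt u$-integral over $[\mu_-,\mu]$, and write $\frac1{1-q}=1+\frac q{1-q}$, so that the ``$1$'' extends the $\wrt u$-integral from $[\mu,\infty)$ down to $[\mu_-,\infty)$ while the remainder joins the shifted piece to form $\frac q{1-q}\int_{-\pi/2}^{3\pi/2}$. This bookkeeping is the substantive content of the derivation and is absent from your outline.
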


%

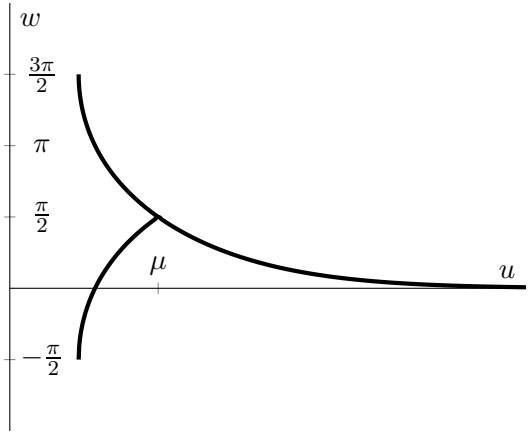
\begin{figure}[hbt!]
\begin{center}
\begin{tikzpicture}
\begin{axis}[graph default, xticklabels={\empty}, yticklabels={\empty}, xtick={2.302585092994046},  ytick={-1.570796326794897, 1.570796326794897, 3.141592653589793, 4.71238898038469}, xmin=0, xmax=8, ymin=-pi, ymax=2*pi]

      \addplot[ultra thick, domain=1.068075:10, samples=1000] {asin((101*x+99-101*ln(10))/(20*sinh (x)))/180*pi};
     
         \addplot[ultra thick, domain=1.068075:2.302585092994046, samples=1000] {pi - asin((101*x+99-101*ln(10))/(20*sinh (x)))/180*pi};

\node at (axis cs:0.5,-pi/2){$-\frac{\pi} 2$};
\node at (axis cs:0.5,pi/2){$\frac{\pi} 2$};
      \node at (axis cs:0.5,pi){$\pi$};
          \node at (axis cs:0.5,3*pi/2){$\frac{3\pi} 2$};
     
     \node at (axis cs:2.302585092994046,0.5){$\mu$};
    \end{axis}
\end{tikzpicture}
\end{center}
\caption{\label{fig:PathNovelIntRep} The contour over which the integrals in Theorem~\ref{propTemmOscIntRepKBessel2} are defined.  The only saddle point on the contour is $R^+_0= \mu + i\frac {\pi} 2$. Note that the two finite endpoints of the contour are $\mu_-- i \frac{\pi}2$ and $\mu_-+ i \frac{3\pi}2$.}  
\end{figure}

\subsection{Outline of paper}  Section~\ref{secBackBesselSaddlePoint} is devoted to defining the $K$-Bessel function and to giving a brief synopsis of the saddle-point method.  Section~\ref{subsecFirstCaseMonoKBessel} is devoted to the proof of the Theorem~\ref{thmFirstCaseMonoKBessel} and Section~\ref{subsecSecondCaseYLessT} is devoted to the proofs of Theorems~\ref{thmSecondCaseOscillKBessel} and~\ref{propTemmOscIntRepKBessel2}.  

\section{Background on the $K$-Bessel function and the saddle point method}\label{secBackBesselSaddlePoint}  The $K$-Bessel function is a solution to the differential equation (known as the {\em modified Bessel equation}) \[\frac{ \Wrt2 w}{\wrt z^2}+ \frac 1 z \frac{\wrt w}{\wrt z} - \left(1+\frac{\nu^2}{z^2}\right)w =0\] and can be defined as~\cite[Page~168]{Mac1899} (or see~\cite[Sections~10.25 and~10.27]{DLMF}) \begin{align}\label{eqnKInTermsOfI}
K_{\nu}(z) := \frac 1 2 \pi \frac{I_{-\nu}(z) - I_\nu(z)}{\sin(\nu \pi)}  \end{align} where $I_\nu(z)$ is the modified Bessel function of the first kind \[I_\nu(z) := \sum_{m=0}^\infty \frac{(\frac1 2 z)^{\nu+2 m}}{m! \Ga(\nu + m+1)}.\]  (The $\Ga(\cdot)$ here is the gamma function.)  Chapters~2 and~7 in~\cite{Olv} give a good introduction to the $K$-Bessel function, and a standard reference for Bessel functions, including the $K$-Bessel function, is~\cite{Wa}.

Of particular importance for us and for many other applications are integral representations of the $K$-Bessel function.  We will derive a new one in this paper, but we start with a well-known one (see~\cite[Page~182~(7)]{Wa} or~\cite[Equation~10.32.9]{DLMF} for example):  \begin{align}\label{IntRepKBessel}
K_{\nu}(z) = \frac 1 2 \int_{-\infty}^\infty e^{-z  \cosh R - \nu R } ~\wrt R = \frac 1 2 \int_{-\infty}^\infty e^{-z  \cosh R + \nu R } ~\wrt R
\end{align} where $z \in \CC \backslash \{0\}$ such that $|\arg(z)|< \frac{\pi}{2}$. 

To compute asymptotics for the $K$-Bessel function, it illustrative to apply to (\ref{IntRepKBessel}) the saddle-point method.   An elementary introduction to the saddle-point method is~\cite[Chapter~8]{Cop} (see also~\cite[Chapter~7]{Cop}).  Let us now give a rough explanation of how the saddle-point method works and leave the detailed proof of how it applies to the $K$-Bessel function to Section~\ref{subsecBndsBessel}.  Let $y \in \RR$ and $w \in \CC$.  Roughly speaking, the saddle-point method allows us to find an asymptotic, as $y \rightarrow \infty$, for an integral of the form \[\int_\gamma e^{-yp(w)}q(w)~\wrt w\]  where $\gamma$ is a contour and the functions $p(w)$ and $q(w)$ are both analytic.  Since the integrand is analytic, we have, by the Cauchy-Goursat theorem, considerable freedom in deforming $\gamma$ into a more suitable contour.  The path of steepest descent is a suitable contour (but not the only one) and it is characterized by \begin{itemize}
\item passing through a zero of $p'(w)$ (i.e. a {\em saddle point}),
\item having $\Im(p(w))$ constant along it, and
\item having the saddle point be a local maximum of $\Re(-p(w))$ along it.
\end{itemize}  Only the saddle points that are global maxima of $\Re(-p(w))$ (referred to as {\em of highest height}) along the path of steepest descent are needed for the asymptotic.  Often there are only a few of these.  For the $K$-Bessel function, there are either one or two of these saddle points depending on the case (and, in the case that there are two, we will reduce it to one by symmetry).  The saddle points of highest height are the points along the path of steepest descent where the integrand is largest when $y$ is large.  Moreover, the integral should be dominated by its restriction to suitable small neighborhoods around the saddle points of highest height.  The saddle-point method, then, is transparent and explanative:  it consists of finding these neighborhoods and verifying that the integral is indeed dominated by this restriction when $y$ is large.  Finally, we note that it is not necessary to only use paths of steepest descent, but there are restrictions on which contours are suitable for the method.

%
%
%
%

\section{Bounds for  $K_{r+it}(y)$ where $|t|$ large, $r$ bounded, and $y$ is real and positive}\label{subsecBndsBessel}



We will now apply the saddle-point method to finding an asymptotic for $K_\nu(y)$ as $y \rightarrow \infty$.  The saddle points and paths of steepest descent for the function $K_{it}(y)$ (i.e. purely imaginary order) have been obtained by N.~M.~Temme~\cite{Tem}.  The saddle points and paths of steepest descent for our function $K_{\nu}(y)$ are the same as we will prove below.  

We will start with the integral representation (\ref{IntRepKBessel}).  There are two cases:  $y \geq t \geq 0$ and $0 < y \leq t$.

\begin{rema}\label{rmkNegativeTIsOK}
Note that if $t <0$, then applying (\ref{IntRepKBessel}) allows us to be in one of these two cases.

\end{rema}

%
%
%
%
%
%
%

\subsection{First case:  $y \geq t \geq 0$} \label{subsecFirstCaseMonoKBessel}  In this section, we give the proof of Theorem~\ref{thmFirstCaseMonoKBessel}.

\begin{proof}[Proof of Theorem~\ref{thmFirstCaseMonoKBessel}]
 The cases $\theta =0$ (equivalently, $t=0$) and $\theta = \pi/2$ (or, equivalently, $t=y$) are exceptional and we leave them to the end.  Let us assume that $0<\theta < \pi/2$. Using (\ref{IntRepKBessel}), we have \begin{align}\label{IntRepKBessel2}
 K_\nu(y) = \frac 1 2 \int_{-\infty}^\infty e^{-y \varphi(R)} e^{rR} ~\wrt R  \end{align}where \[\varphi(R):= \cosh R - i R \sin \theta.\]
 
 The saddle points (values of $R$ for which $\varphi'(R)=0$) are as follows~\cite{Tem} (see also~\cite[Section~2.1]{BST}
): \[R_k := i\left( (-1)^k \theta + k\pi\right), \quad k \in \ZZ.\]

Let us now write $R=u+iw$ and thus we have \begin{align*}
\Re(-\varphi(R)) =& -\cosh u \cos w - w \sin \theta \\
  \Im(-\varphi(R)) =& -\sinh u \sin w + u \sin \theta \end{align*}

The path of steepest descent through the saddle point $R_0 = i \theta$ is given by $ \Im(-\varphi(R)) =  \Im(-\varphi(R_0))$ and is the following curve~\cite{Tem} (see Figure~\ref{fig:PathSteepDesCase1}): \begin{align}\label{eqnPathSteepDescentCase1}
 w = \arcsin\left(\sin \theta \frac{u}{\sinh u} \right), \quad -\infty < u < \infty. \end{align}  We remark that $w'(0)=0$ and that $w'(u)$ is bounded over all $-\infty < u <\infty$.

\begin{figure}[hbt!]
\begin{center}
\begin{tikzpicture}
\begin{axis}[graph default, xticklabels={\empty}, yticklabels={\empty}, xtick style={draw=none},  ytick={1.57079632679}, xmin=-10, xmax=10, ymin=-0.1, ymax=pi/2+0.4]
     \addplot[ultra thick, domain=-10:10, samples=100] {asin(sin(deg (pi/4)))/180*pi*x/sinh(x)};
     \node at (axis cs:0.5,pi/2){$\frac{\pi} 2$};
     \node at (axis cs:0.5,pi/4+0.07){$\theta$};
    \end{axis}
\end{tikzpicture}
\end{center}
\caption{\label{fig:PathSteepDesCase1} The path of steepest descent for the first case $y \geq t \geq 0$ when $\theta=\frac{\pi}4$.  The equation of this path is given by (\ref{eqnPathSteepDescentCase1}).}
\end{figure}
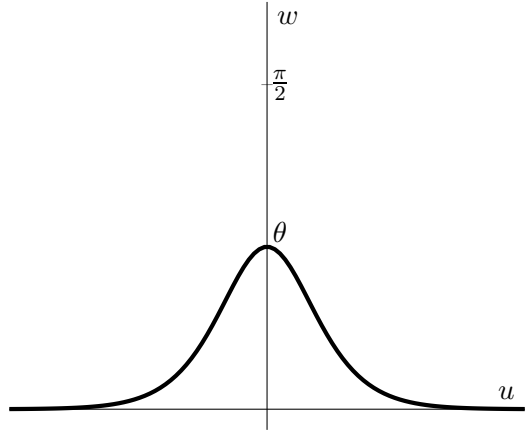

Using the Cauchy-Goursat theorem, we can replace the integral along the real axis from (\ref{IntRepKBessel}) with an integral along the path of steepest descent though the saddle point $R_0$.  Doing so, we obtain the following integral representation: \begin{align}\label{eqnAltIntRepKBesselMonotonic}
 K_\nu(y) = \frac 1 2 \int_{-\infty}^\infty e^{-y (\cosh u \cos w +w \sin \theta )} e^{ru}e^{irw}\left(1+i \frac{\wrt w}{\wrt u} \right)~\wrt u \end{align} whenever $y > t > 0$ holds.  (We will show below that this integral representation for $K_\nu(y)$ is also valid for the exceptional cases $t=0$ and $t=y$.) 
 
Consider the neighborhood of the saddle point $R_0$ along the path of steepest descent determined by $|u| < \min(\frac{\cot \theta} 2, \frac{\cot^4 \theta}{16},\frac{\theta} 4)$.  (Note that $0<\cot \theta< \infty$ as $0< \theta < \pi/2$.)   Using the Taylor series for $\sinh(x), 1/(1+x)$ and the geometric series, we have that \begin{align}\label{eqnBndsUSinhUKBessel} \frac{u}{\sinh u} = 1 - \kappa u^2+ O(u^4)
  \end{align} for $u$ in our small neighborhood.  Here $\kappa := \frac{1}{6}$ and the implied constant is less than $1/2$.  (See~\cite[Chapter 1, Section 2.2]{Olv} for the definition of the notion of {\em implied constant}.)  Thus,\begin{align}\label{eqnBndsCosSquaredWKBessel}
  \cos^2 w = 1 - \sin^2 \theta \left(\frac{u}{\sinh u}\right)^2 = \cos^2 \theta + 2\kappa u^2 \sin^2 \theta  + O(u^4) \end{align} for our small neighborhood.  Here the implied constant less than $\frac{10}{9} \sin^2 \theta$.  Using the Taylor series for $\sqrt{1 +x}$, we have that \begin{align}\label{eqnEstForcoswKBessel}
 \cos w = \cos \theta +\kappa  u^2 \sin \theta \tan \theta + O(u^3) \end{align} for $u$ in our small neighborhood.  Here the implied constant is less than $1/2$.  Note that, since that $0<w \leq \theta$, $\cos w \geq0$.
  

By the mean value theorem, we have that there exists $\widetilde{w} \in (w, \theta)$ such that \[\theta - w = \frac{\sin \theta - \sin w}{\cos\widetilde{w}},\] which yields \begin{align}\label{eqnEstForwKBessel}
 w = \theta - \frac{\kappa \sin \theta}{\cos \widetilde{w}}u^2+O(u^4) \end{align}  for our small neighborhood.  Note that $\cos \theta \leq \cos\widetilde{w} <1$.  Here the implied constant is less than $\frac{\sin \theta}{2 \cos \widetilde{w}} \leq \frac 1 2 \tan \theta$.
 
Pick $1/3 < \delta < 1/2$ and let \[y_0^{-\delta} = \begin{cases}  \min(\frac{\cot \theta} 2, \frac{\cot^4 \theta}{16},\frac{1}{2|r|}, \frac{\theta} 4) & \textrm{ if } |r|>1 \\ \min(\frac{\cot \theta} 2, \frac{\cot^4 \theta}{16}, \frac{\theta} 4) & \textrm{ if } |r| \leq 1 \end{cases}.\]  We now estimate the integral in (\ref{eqnAltIntRepKBesselMonotonic}) around a small neighborhood of the saddle point $R_0$ along the path of steepest descent determined by $|u| < y^{-\delta}$ where $y \geq y_0$.   As we are interested in the behavior of $K_\nu(y)$ as $y \rightarrow \infty$, we may assume that $y$ is large.  Using the proceeding estimates and the Taylor series for $\cosh(x)$, we have \begin{align*}
e^{-y (\cosh u \cos w +w \sin \theta )} &= e^{-y (\cos \theta +\theta \sin \theta )}e^{-y A u^2} e^{-y B u^3} \\&=  e^{-y (\cos \theta +\theta \sin \theta )}e^{-y A u^2} (1 + O(y^{1-3\delta}))  \end{align*} and \begin{align*} &e^{ru} = 1 +O(y^{-\delta}), \quad e^{irw-ir\theta} = 1 + O(y^{-2\delta}), \\ &\left(1+i \frac{\wrt w}{\wrt u} \right)= 1 -2i \frac{\kappa \sin \theta}{\cos \widetilde{w}}u+O(u^3) = 1 +O(y^{-\delta})
  \end{align*} over our small neighborhood.   Here $A = \frac 1 2 \cos \theta + \kappa \sin \theta  \tan \theta- \kappa \frac{\sin^2 \theta}{\cos \widetilde{w}}>0$ and $|B| < 11/12$.  Note that since we are on a small neighborhood around $0$, the approximation $e^x = 1 +O(x)$ holds.   Thus, we have 
\begin{align*} \frac 1 2 \int_{-y^{-\delta}}^{y^{-\delta}} &e^{-y (\cosh u \cos w +w \sin \theta )} e^{ru}e^{irw} \left(1+i \frac{\wrt w}{\wrt u} \right)~\wrt u \\ &= \frac 1 2 e^{-y (\cos \theta +\theta \sin \theta )}e^{ir\theta}\left(1 +O(y^{1-3\delta})\right)\int_{-y^{-\delta}}^{y^{-\delta}} e^{-y A u^2}~\wrt u. \end{align*}   (Note that $3 \delta -1 < \delta$ for our choice of $\delta$.)

Let $A_0 = \frac1 2 \cos \theta$. Now note that $A$ is a function of $y$ where \[
\lim_{y \rightarrow \infty} A = A_0 \textrm{ and } 0<A \leq  A_0.\]  Pick an $\varepsilon>0$ and set $\widetilde{A}:= \frac{A_0}{1 + \varepsilon}$.  Following the method in Copson~\cite[Chapter~8, (36.6) ff]{Cop}, namely changing variables $u^2 \mapsto y \widetilde{A} u^2$ and estimating, we obtain \[\int_{-y^{-\delta}}^{y^{-\delta}} e^{-y \widetilde{A} u^2}~\wrt u = \sqrt{\frac \pi {y \widetilde{A}}} \left(1 +o(y^{1-3 \delta}) \right).\]  Similarly, we have \[\int_{-y^{-\delta}}^{y^{-\delta}} e^{-y A_0 u^2}~\wrt u = \sqrt{\frac \pi {y A_0}} \left(1 +o(y^{1-3 \delta})\right).\]  (Note that it is important for $ \delta < 1/2$ here.)

Then, for every $\varepsilon>0$, there exists $y_1>0$ such that, whenever $y \geq y_1$, we have that \[1 \leq \frac{\int_{-y^{-\delta}}^{y^{-\delta}} e^{-y A u^2}~\wrt u }{\int_{-y^{-\delta}}^{y^{-\delta}} e^{-y A_0 u^2}~\wrt u } \leq \frac{\int_{-y^{-\delta}}^{y^{-\delta}} e^{-y \widetilde{A} u^2}~\wrt u }{\int_{-y^{-\delta}}^{y^{-\delta}} e^{-y A_0 u^2}~\wrt u } = \sqrt{1 + \varepsilon}.\]  Taking the limit as $y \rightarrow \infty$ and noting that $\varepsilon$ is arbitrary, we have that\begin{align}\label{eqnDomBehKBessel}
  \frac 1 2 \int_{-y^{-\delta}}^{y^{-\delta}} e^{-y (\cosh u \cos w +w \sin \theta )} &e^{ru}e^{irw} \left(1+i \frac{\wrt w}{\wrt u} \right)~\wrt u \sim \\ \nonumber &\sqrt{\frac{\pi}{2 y \cos \theta}}e^{-y (\cos \theta +\theta \sin \theta )}e^{ir\theta}\end{align} as $y \rightarrow \infty$.  (Compare with~\cite[(36.7)]{Cop}.)  Note the agreement with \cite[Page 87 (18)]{EMOT}~and~\cite[(14)]{BST} in the special case of purely imaginary order.
  
We now consider the rest of the integral, which we now show is negligible.  We will first integrate along the contour $\ell^+:=\{u + i w(y^{-\delta}): u\geq y^{-\delta}\}$ and then use the Cauchy-Goursat theorem.  The details are as follows.  Let \begin{align}\label{eqnBndYKBessel}
 y \geq \max\left(y_0, \left(\frac{4|r|}{\cos \theta}\right)^{\frac{1}{1-\delta}}\right). \end{align}  The integral along this contour is given by \[C:= \frac 1 2 \int_{y^{-\delta}}^\infty e^{-y \varphi\left(u +i w(y^{-\delta})\right)} e^{r\left(u +i w(y^{-\delta})\right) }~\wrt u\]  Note that the integrand comes from (\ref{IntRepKBessel2}).  Now we have that \begin{align*}
|C| \leq & \frac 1 2 \int_{y^{-\delta}}^\infty e^{-y\left(\cosh u \cos w(y^{-\delta}) + w(y^{-\delta}) \sin \theta\right)}e^{|r|u}~\wrt u  \\ \leq & \frac 1 2 \int_{y^{-\delta}}^\infty e^{-y\left((1 + \frac{u^2}{2}) \cos w(y^{-\delta}) +w(y^{-\delta}) \sin \theta\right)}e^{|r|u}~\wrt u \\\ \leq & \frac 1 2 \int_{y^{-\delta}}^\infty e^{-y\left( \frac{u^2}{2}\cos w(y^{-\delta})+\left(\cos \theta +\kappa  y^{-2\delta} \sin \theta \tan \theta + O(y^{-3\delta})\right) + \left(\theta - \frac{\kappa \sin \theta}{\cos \widetilde{w}}y^{-2\delta}+O(y^{-4\delta})\right) \sin \theta\right)}e^{|r|u}~\wrt u \end{align*}  Here we have used the fact that $\cosh u \geq 1 + u^2/2$ and applied (\ref{eqnEstForcoswKBessel}, \ref{eqnEstForwKBessel}).  Since $\kappa \sin \theta \tan \theta - \kappa \sin^2 \theta/\cos \widetilde{w}\geq 0$ and $\cos w \geq \cos \theta$, we have that \begin{align*}
|C| \leq& \frac {e^{-y(\cos \theta + \theta \sin \theta)}} {2} \int_{y^{-\delta}}^\infty e^{-y\left( \frac{u^2}{2}\cos \theta\right)}e^{|r|u}\left(1+ O(y^{(1-3 \delta)}\right)~\wrt u \\  \leq& \frac {e^{-y(\cos \theta + \theta \sin \theta)}} {2} \int_{y^{-\delta}}^\infty e^{-y\left( \frac{u^2}{4}\cos \theta\right)} \left(1+ O(y^{(1-3 \delta)}\right)~\wrt u \\  =& \frac {e^{-y(\cos \theta + \theta \sin \theta)}} {\sqrt{y \cos \theta}} \int_{\frac{y^{1/ 2-\delta} \sqrt{\cos \theta}}2}^\infty e^{-u^2 } \left(1+ O(y^{(1-3 \delta)}\right)~\wrt u \\  \leq& \frac {e^{-y(\cos \theta + \theta \sin \theta)}} {\sqrt{y \cos \theta}} \frac{e^{-\frac {y^{1-2 \delta}} 4  \cos \theta}}{\frac{y^{1/ 2-\delta} \sqrt{\cos \theta}}2 + \sqrt{\frac {y^{1-2 \delta}} 4  \cos \theta+ \frac 4 \pi}}  \left(1+ O(y^{(1-3 \delta)}\right) \end{align*} where the second inequality follows from (\ref{eqnBndYKBessel}), the equality from changing variables $y u^2\cos \theta /4 \mapsto u^2$, and the final inequality from a standard bound for $\operatorname{erfc}(x)$ (see~\cite[7.1.13]{AS64} for example).  This shows that, for large $y$, $|C|$ is negligible compared to the dominant behavior that we computed in (\ref{eqnDomBehKBessel}).

Now let us integrate over the contour $\ell^-(U):=\{U+i W:  0\leq W \leq w(y^{-\delta})\}$ for some $U \geq y^{-\delta}$: \[D :=  \frac 1 2 \int_0^{w(y^{-\delta})} e^{-y \varphi\left(U+i W)\right)} e^{r\left(U+i W)\right) }~\wrt W.\]  When $U$ is large enough, we have that \[|D| \leq e^{-y \cosh U \cos \theta} e^{|rU|} \theta,\] which for $U$ large enough is negligible compared to (\ref{eqnDomBehKBessel}).  Let $s^+(U) := \{v + i w(y^{-\delta}): U \geq v\geq y^{-\delta}\}$ and $s^-(U) := \{U+i W:  w(W) \leq W \leq w(y^{-\delta})\}$.  Then $s^+(U) \subset \ell^+$ and $s^-(U)\subset \ell^-(U)$ and $s^+(U) \cup s^-(U)$ meets a piece $c(U)$ of the path of steepest descent.  Then $s^+(U) \cup s^-(U) \cup c(U)$ is a simple closed contour and the Cauchy-Goursat theorem implies that the integral over $c(U)$ is negligible compared to (\ref{eqnDomBehKBessel}).  Letting $U \rightarrow \infty$ shows that the integral over the piece of the path of steepest descent for which $u \geq y^{-\delta}$ is also negligible compared to (\ref{eqnDomBehKBessel}).  For the integral over the remaining piece of the path of steepest descent, we note that $w(u), \cosh u$ are even functions and that the analogous proof also shows that it is negligible compared to (\ref{eqnDomBehKBessel}).  This proves the desired result in the case $0<\theta < \pi/2$.

We now prove the case $\theta = 0$.  The proof is a simplification of the previous case.  The details are as follows.  The integral representation for this case is \begin{align}\label{eqnAltIntRepKBesselMonotonic2}
 K_\nu(y) = \frac 1 2 \int_{-\infty}^\infty e^{-y \cosh u} e^{ru}~\wrt u \end{align} because $t=0$.  Pick $1/3 < \delta < 1/2$ and let $y_0:= 2^{1/\delta}$.  As in the previous case, the dominant behavior of the integral comes from a small neighborhood around the origin, namely $|u| < y^{-\delta}$ where $y \geq y_0$.  On this neighborhood, $\cosh u = 1 + u^2/2 + O(u^4)$.  Let us consider this contribution first:  \begin{align*}K_\nu(y) = \frac 1 2 \int_{-y^{-\delta}}^{y^{-\delta}} e^{-y \cosh u} e^{ru}~\wrt u =& \frac 1 2 e^{-y} \int_{-y^{-\delta}}^{y^{-\delta}} e^{-y \frac{u^2} 2} \left(1 + O(y^{1-4 \delta})\right) \left(1 + O(y^{-\delta})\right)~\wrt u \\=& \frac 1 2 e^{-y} \int_{-y^{-\delta}}^{y^{-\delta}} e^{-y \frac{u^2} 2} \left(1 + O(y^{-1/3})\right)~\wrt u \\=& \sqrt{\frac {\pi}{2y}}e^{-y} \left(1 + O(y^{-1/3})\right).
  \end{align*}  The final equality is obtained, as in the case $0<\theta < \pi/2$, by changing variables $u^2 \mapsto \frac y 2 u^2$ and estimating $\operatorname{erf}(x) = 1 - \operatorname{erfc}(x)$ with the standard bound for $\operatorname{erfc}(x)$.
  
 Away from this neighborhood, the integral is negligible.   Let \begin{align*}
 y \geq \max\left(y_0, \left(4|r|\right)^{\frac{1}{1-\delta}}\right). \end{align*}  We have that \begin{align*}\left |  \frac 1 2 \int_{y^{-\delta}}^\infty e^{-y \cosh u} e^{ru}~\wrt u\right| \leq  \frac 1 2 e^{-y}  \int_{y^{-\delta}}^\infty e^{-\frac y 4 u^2}~\wrt u  \leq \frac {e^{-y}} {\sqrt{y}} \frac{e^{-\frac {y^{1-2 \delta}} 4 }}{\frac{y^{1/ 2-\delta} }2 + \sqrt{\frac {y^{1-2 \delta}} 4 + \frac 4 \pi}}.\end{align*}  For the rest of the integral over $-\infty$ to $-y^{-\delta}$, we obtain a similar bound.  This gives the desired result for the case $\theta =0$.
 
 We now prove the final case of $\theta=\pi/2$.  Here $t=y$.  Unlike in the previous two cases, here the saddle points $R_k$ are no longer of order 1 but are, instead, of order 2 (see~\cite[Page~40]{Erd56} for the definition of order).\footnote{Pairs of saddle points when $\theta<\pi/2$ have coalesced at $\theta =\pi/2$.}  It still suffices to consider the saddle point $R_0 = i \pi/2$.  We now have that  \[w = \arcsin\left(\frac{u}{\sinh u} \right), \quad -\infty < u < \infty.\]  We remark that $w'(u)$ is a bounded, odd function.  It has a jump discontinuity at $u=0$.

Pick $1/4 < \delta < 1/3$.  Consider the neighborhood of the saddle point $R_0$ along the path of steepest descent determined by $|u| < 1/4$.  We have \begin{align}\label{eqnBndsUSinhU2KBessel} \frac{u}{\sinh u} = 1 - \kappa u^2+ \lambda u^4+O(u^6)
  \end{align} for $u$ in our small neighborhood.  Here $\kappa=\frac 1 6$, $\lambda := \frac 7 {320}$, and the implied constant is less than $1/2$.  Using this and the Taylor series for $\sqrt{1+x}$, we obtain \[\cos w = \sqrt{2 \kappa} |u| - \widetilde{\lambda} |u|^3 +O(u^5)\] where $\widetilde{\lambda}:= \frac{21}{200}$ and the implied constant is less than $2$.  (Note that the precise value of $\widetilde{\lambda}$ does not come into the computation of the dominant behavior.)  Now using the Taylor series for $\arccos(x)$, we have \[w = \frac \pi 2 - \sqrt{2 \kappa} |u| + \widetilde{\lambda} |u|^3  - \frac 1 6 \left( \sqrt{2 \kappa} |u| \right)^3+O(u^5),\]  and taking the derivative with respect to $u$ yields \[\frac{\wrt w}{\wrt u}= \begin{cases} - \sqrt{2 \kappa} +O(u^2) & \text{ if } 0^+ \leq u < 1/4  \\ \sqrt{2 \kappa} + O(u^2) & \text{ if } -1/4 < u \leq  0^-\end{cases}.\]  Using the Taylor series for $\cosh(x)$, we have \begin{align}\label{eqnCoshCoswEstpi2KBessel}
 \cosh u \cos w + w = \frac \pi 2 + \frac 4{9 \sqrt{3}}|u|^3+O(u^5). \end{align}  First note that \begin{align*} &\int_{-y^{-\delta}}^{y^{-\delta}} e^{-y \left( \frac 4{9 \sqrt{3}}|u|^3\right)} \left(1+i \frac{\wrt w}{\wrt u} \right)~\wrt u \\ =& \int_{0}^{y^{-\delta}} e^{-y \left( \frac 4{9 \sqrt{3}}u^3\right)} \left(1-i\sqrt{2 \kappa}+O(y^{-2\delta})  \right)~\wrt u  +\int_{-y^{-\delta}}^{0} e^{y \left( \frac 4{9 \sqrt{3}}u^3\right)} \left(1 +i\sqrt{2 \kappa}+O(y^{-2\delta}) \right)~\wrt u \\ =&  \int_{0}^{y^{-\delta}} e^{-y \left( \frac 4{9 \sqrt{3}}u^3\right)} \left(1-i\sqrt{2 \kappa}+O(y^{-2\delta}) + 1 +i\sqrt{2 \kappa}+O(y^{-2\delta})  \right)~\wrt u \\ =& (2 + O(y^{-1/4})) \int_{0}^{y^{-\delta}} e^{-y \left( \frac 4{9 \sqrt{3}}u^3\right)}~\wrt u  
  \end{align*} where the third equality follows from changing variables $u \mapsto -u$.
  

  Consequently, we have \begin{align*}
 \frac 1 2 &\int_{-y^{-\delta}}^{y^{-\delta}} e^{-y (\cosh u \cos w +w  )} e^{ru}e^{irw}\left(1+i \frac{\wrt w}{\wrt u} \right)~\wrt u \\ =& \frac 1 2 e^{-\frac \pi 2 y +i\frac \pi 2 r}\int_{-y^{-\delta}}^{y^{-\delta}} e^{-y \left( \frac 4{9 \sqrt{3}}|u|^3+O(u^5)\right)} e^{ru}e^{O(u)}\left(1+i \frac{\wrt w}{\wrt u} \right)~\wrt u \\ =& \frac 1 2 e^{-\frac \pi 2 y +i\frac \pi 2 r}\int_{-y^{-\delta}}^{y^{-\delta}} e^{-y \left( \frac 4{9 \sqrt{3}}|u|^3\right)} \left(1 +O(y^{1-5\delta})\right) \left(1 +O(y^{-\delta})\right)\left(1+i \frac{\wrt w}{\wrt u} \right)~\wrt u  \\ =& \frac 1 2 e^{-\frac \pi 2 y +i\frac \pi 2 r}\left(1 +O\left(y^{-1/4}\right)\right) \int_{-y^{-\delta}}^{y^{-\delta}} e^{-y \left( \frac 4{9 \sqrt{3}}|u|^3\right)} \left(1+i \frac{\wrt w}{\wrt u} \right)~\wrt u \\ =&\frac 1 2 e^{-\frac \pi 2 y +i\frac \pi 2 r} \left(2 + O(y^{-1/4})\right) \int_{0}^{y^{-\delta}} e^{-y \left( \frac 4{9 \sqrt{3}}u^3\right)}~\wrt u \\ =&\frac 1 2 e^{-\frac \pi 2 y +i\frac \pi 2 r}y^{-1/3}\left( \frac{9\sqrt{3}}{4}\right)^{1/3} \left(2 + O(y^{-1/4})\right) \int_{0}^{\left( 4/9\sqrt{3}\right)^{1/3} y^{1/3 -\delta}} e^{-u^3}~\wrt u \\ =&\frac 1 2 e^{-\frac \pi 2 y +i\frac \pi 2 r}y^{-1/3}\left( \frac{9\sqrt{3}}{4}\right)^{1/3} \left(2 + O(y^{-1/4})\right)\frac{\Gamma(\frac 1 3)-\Gamma(\frac 1 3,  \frac{4}{9\sqrt{3}} y^{1-3\delta})}{3} \\ =&\frac 1 2 e^{-\frac \pi 2 y +i\frac \pi 2 r}y^{-1/3}\left( \frac{9\sqrt{3}}{4}\right)^{1/3} \left(2 + O(y^{-1/4})\right)\left(\frac{\Gamma(\frac 1 3)} 3 +o(y^{-1/4})\right)\end{align*} where in the second-to-last equality we have changed variables $u \mapsto u^{1/3}$ and in last equality we have used a standard estimate for the incomplete gamma function $\Gamma(a, x)$ (see~\cite[6.5.32]{AS64} for example).  Consequently we have that \begin{align*}  \frac 1 2 \int_{-y^{-\delta}}^{y^{-\delta}} e^{-y (\cosh u \cos w +w  )} e^{ru}e^{irw}\left(1+i \frac{\wrt w}{\wrt u} \right)~\wrt u \sim e^{-\frac \pi 2 y +i\frac \pi 2 r}y^{-1/3}\frac{\Gamma(\frac 1 3)} {2^{\frac 2 3}3^{\frac 1 6}},
  \end{align*} which agrees with~\cite[Pages~78,~247]{Wa}~and~\cite[(14)]{BST} in the special case of purely imaginary order.

We now show that the rest of the integral is negligible.  As in the case $0< \theta < \pi/2$, we will use the Cauchy-Goursat theorem and the contours given by $C$ and $D$.  Let \begin{align}\label{eqnBndypi2CaseKBessel} y > \max\left(4^{\frac 1{\delta}}, \left(8\sqrt{3} |r| \right)^{\frac{1}{1-2\delta}}\right). \end{align} 
 We have that \begin{align*}
|C| \leq & \frac 1 2 \int_{y^{-\delta}}^\infty e^{-y\left(\cosh u \cos w(y^{-\delta}) + w(y^{-\delta}) \right)}e^{|r|u}~\wrt u  \\ \leq & \frac 1 2 \int_{y^{-\delta}}^\infty e^{-y\left((1 + \frac{u^2}{2}) \cos w(y^{-\delta}) +w(y^{-\delta}) \right)}e^{|r|u}~\wrt u \\\ \leq & \frac 1 2 \int_{y^{-\delta}}^\infty e^{-y\left( \frac{u^2}{2}\cos w(y^{-\delta})+\left(\sqrt{2 \kappa} y^{-\delta} - \widetilde{\lambda} y^{-3\delta} +O(y^{-5\delta})\right) + \left(\frac \pi 2 - \sqrt{2 \kappa} y^{-\delta} + \widetilde{\lambda} y^{-3\delta}  - \frac 1 6 \left( \sqrt{2 \kappa} y^{-\delta} \right)^3+O(y^{-5\delta})\right)\right)}e^{|r|u}~\wrt u \\ =& \frac 1 2e^{-\frac \pi 2 y + \frac {\left( \sqrt{2 \kappa}  \right)^3}{6}y^{1-3\delta} }\left( 1 +O(y^{1-5\delta})\right)\int_{y^{-\delta}}^\infty e^{-y \frac{u^2}{2}\left(\sqrt{2 \kappa} y^{-\delta} - \widetilde{\lambda} y^{-3\delta} +O(y^{-5\delta})  \right)}e^{|r|u}~\wrt u.
\end{align*} 

Note that since $0< y^{-\delta}\leq \frac 1 4$, we have that $\sqrt{2 \kappa} y^{-\delta} - \widetilde{\lambda} y^{-3\delta} +O(y^{-5\delta})\geq \frac1{2 \sqrt{3}} y^{-\delta}$ because the implied constant has norm less than $2$.  Applying (\ref{eqnBndypi2CaseKBessel}), we have that  \begin{align*}
|C| \leq & \frac 1 2e^{-\frac \pi 2 y + \frac {\left( \sqrt{2 \kappa}  \right)^3}{6}y^{1-3\delta} }\left( 1 +O(y^{1-5\delta})\right)\int_{y^{-\delta}}^\infty e^{-y \frac{u^2}{4}\left(\sqrt{2 \kappa} y^{-\delta} - \widetilde{\lambda} y^{-3\delta} +O(y^{-5\delta})  \right)}~\wrt u \\\leq & \frac 1 2e^{-\frac \pi 2 y + \frac {\left( \sqrt{2 \kappa}  \right)^3}{6}y^{1-3\delta} }\left( 1 +O(y^{1-5\delta})\right)\int_{y^{-\delta}}^\infty e^{-y^{1-\delta} \frac{u^2}{8 \sqrt{3}}}~\wrt u \\\leq & \frac {\sqrt{8 \sqrt{3}}}{2 y^{\frac{1-\delta} 2}}e^{-\frac \pi 2 y + \frac {\left( \sqrt{2 \kappa}  \right)^3}{6}y^{1-3\delta} }\left( 1 +O(y^{1-5\delta})\right)\int_{(8 \sqrt{3})^{-1/2}y^{\frac{1-3 \delta}2}}^\infty e^{-u^2}~\wrt u 
\\\leq & \frac {\sqrt{8 \sqrt{3}}}{2 y^{\frac{1-\delta} 2}}e^{-\frac \pi 2 y + \frac {\left( \sqrt{2 \kappa}  \right)^3}{6}y^{1-3\delta} }\left( 1 +O(y^{1-5\delta})\right)\frac{e^{-\frac {y^{1-3 \delta}}{8 \sqrt{3}}}}{\frac{y^{\frac{1-3 \delta}2}}{\sqrt{8 \sqrt{3}}}+\sqrt{\frac {y^{1-3 \delta}}{8 \sqrt{3}} + \frac 4 \pi}}.\end{align*}

Since $\frac{\left( \sqrt{2 \kappa}  \right)^3}{6} - \frac1{8 \sqrt{3}} = - \frac 5 {72 \sqrt{3}}$ and $\frac{1-\delta}2 > \frac 1 3$, we have that $|C|$ is negligible compared to the dominant behavior that we computed.

For $D$, we have the following estimate:  \begin{align*}|D| \leq & \frac 1 2 \int_{0}^{w(y^{-\delta})} e^{-y\left(\cosh U \cos W + W \right)}e^{|rU|}~\wrt W  \\ \leq & \frac 1 2 w(y^{-\delta})e^{-y\cosh U \cos (w(y^{-\delta}))}e^{|rU|}  \\ = & \frac 1 2 w(y^{-\delta})e^{-y\cosh U \left( \sqrt{2 \kappa} y^{-\delta} - \widetilde{\lambda} y^{-3\delta} +O(y^{-5\delta})\right)}e^{|rU|}   \\ = & \frac 1 2 w(y^{-\delta})e^{-\frac{y^{1-\delta}}{2 \sqrt{3}}\cosh U}e^{|rU|},
  \end{align*} which, when $U$ is large enough, is negligible compared to the dominant behavior that we computed.  Note that $\cos W \geq \cos (w(y^{-\delta}))$.  The integral over the remaining piece of the path of steepest descent is handled in a manner analogous to the $0< \theta < \pi/2$ case.

  This proves the desired result in all cases.

\end{proof}

\subsection{Second case:  $0 < y < t$}\label{subsecSecondCaseYLessT}  In this section, we prove Theorem~\ref{thmSecondCaseOscillKBessel} and, to do so, we must first prove Theorem~\ref{propTemmOscIntRepKBessel2}.  Let define the constant $\mu>0$ by $t = y \cosh \mu$ and the function \[\psi(u) := \cosh u \cos w +w \cosh \mu.\]  We start by finding the saddle points and suitable integral representations.

Using (\ref{IntRepKBessel}), we have \begin{align}\label{IntRepKBessel3}
 K_\nu(y) = \frac 1 2 \int_{-\infty}^\infty e^{-y \phi(R)} e^{rR} ~\wrt R  \end{align}where \[\phi(R):= \cosh R - i R\cosh \mu.\]
 
 The saddle points (values of $R$ for which $\phi'(R)=0$) are as follows~\cite{Tem} (see also~\cite[Section~2.1]{BST}
): \[R^\pm_k := \pm \mu + i\left( \frac \pi 2 + 2k\pi\right), \quad k \in \ZZ.\]

Let us now write $R=u+iw$ and thus we have \begin{align*}
\Re(-\phi(R)) =& -\cosh u \cos w  -w \cosh \mu = - \psi(u),\\
  \Im(-\phi(R)) =& -\sinh u \sin w + u \cosh \mu. \end{align*}

The paths of steepest descent/ascent through the saddle points $R^\pm_k$ is given by $ \Im(-\phi(R)) =  \Im(-\phi(R^\pm_k))$ and is the following family of curves~\cite{Tem}: \begin{align}\label{eqnPathSteepDesCase2}
  \sin w = \cosh \mu \frac {u}{\sinh u} \pm \frac{\sinh \mu - \mu \cosh \mu}{\sinh u}. \end{align}  We use only the parts of these curves as shown as solid line in~\ref{fig:PathSteepDesCase2}, which we will refer to as the path of steepest descent.  
  
  \begin{figure}[hbt!]
\begin{center}
\begin{tikzpicture}
\begin{axis}[graph default, xticklabels={\empty}, yticklabels={\empty}, xtick={-2.302585092994046,2.302585092994046},  ytick={1.570796326794897, 3.141592653589793, 4.71238898038469, 6.283185307179586, 7.853981633974483, 9.42477796076938, 10.995574287564276, 12.566370614359173, 14.13716694115407,  15.707963267948966, 17.278759594743863, 18.849555921538759, 20.420352248333656}, xmin=-8, xmax=8, ymin=-1.5, ymax=7*pi]
     \addplot[ultra thick, domain=2.302585092994046:10, samples=200] {asin((101*x+99-101*ln(10))/(20*sinh (x)))/180*pi};
     \addplot[ultra thick, domain=1.068075:2.302585092994046, samples=200] {pi - asin((101*x+99-101*ln(10))/(20*sinh (x)))/180*pi};
     \addplot[ultra thick, dotted, domain=2.302585092994046:10, samples=200] {pi - asin((101*x+99-101*ln(10))/(20*sinh (x)))/180*pi};
     \addplot[ultra thick, domain=1.068075:2.302585092994046, samples=200] {2*pi + asin((101*x+99-101*ln(10))/(20*sinh (x)))/180*pi};
     \addplot[ultra thick, dotted, domain=2.302585092994046:10, samples=200] {2*pi + asin((101*x+99-101*ln(10))/(20*sinh (x)))/180*pi};
      \addplot[ultra thick, domain=1.068075:2.302585092994046, samples=200] {3*pi - asin((101*x+99-101*ln(10))/(20*sinh (x)))/180*pi};
      \addplot[ultra thick, dotted, domain=2.302585092994046:10, samples=200] {3*pi - asin((101*x+99-101*ln(10))/(20*sinh (x)))/180*pi};
      \addplot[ultra thick, domain=1.068075:2.302585092994046, samples=200] {4*pi + asin((101*x+99-101*ln(10))/(20*sinh (x)))/180*pi};
      \addplot[ultra thick, dotted, domain=2.302585092994046:10, samples=200] {4*pi + asin((101*x+99-101*ln(10))/(20*sinh (x)))/180*pi};
      \addplot[ultra thick, domain=1.068075:2.302585092994046, samples=200] {5*pi - asin((101*x+99-101*ln(10))/(20*sinh (x)))/180*pi};
      \addplot[ultra thick, dotted, domain=2.302585092994046:10, samples=200] {5*pi - asin((101*x+99-101*ln(10))/(20*sinh (x)))/180*pi};
      \addplot[ultra thick, domain=1.068075:2.302585092994046, samples=200] {6*pi + asin((101*x+99-101*ln(10))/(20*sinh (x)))/180*pi};
      \addplot[ultra thick, dotted, domain=2.302585092994046:10, samples=200] {6*pi + asin((101*x+99-101*ln(10))/(20*sinh (x)))/180*pi};
       \addplot[ultra thick, domain=1.068075:2.302585092994046, samples=200] {7*pi - asin((101*x+99-101*ln(10))/(20*sinh (x)))/180*pi};
       \addplot[ultra thick, dotted, domain=2.302585092994046:10, samples=200] {7*pi - asin((101*x+99-101*ln(10))/(20*sinh (x)))/180*pi};

       \addplot[ultra thick, domain=-10:-2.302585092994046, samples=200] {asin((101*x-99+101*ln(10))/(20*sinh (x)))/180*pi};
     \addplot[ultra thick, domain=-2.302585092994046:-1.068075, samples=200] {pi - asin((101*x-99+101*ln(10))/(20*sinh (x)))/180*pi};
     \addplot[ultra thick, dotted, domain=-10:-2.302585092994046, samples=200] {pi - asin((101*x-99+101*ln(10))/(20*sinh (x)))/180*pi};
     \addplot[ultra thick, domain=-2.302585092994046:-1.068075, samples=200] {2*pi + asin((101*x-99+101*ln(10))/(20*sinh (x)))/180*pi};
     \addplot[ultra thick, dotted, domain=-10:-2.302585092994046, samples=200] {2*pi + asin((101*x-99+101*ln(10))/(20*sinh (x)))/180*pi};
      \addplot[ultra thick, domain=-2.302585092994046:-1.068075, samples=200] {3*pi - asin((101*x-99+101*ln(10))/(20*sinh (x)))/180*pi};
      \addplot[ultra thick, dotted, domain=-10:-2.302585092994046, samples=200] {3*pi - asin((101*x-99+101*ln(10))/(20*sinh (x)))/180*pi};
      \addplot[ultra thick, domain=-2.302585092994046:-1.068075, samples=200] {4*pi + asin((101*x-99+101*ln(10))/(20*sinh (x)))/180*pi};
      \addplot[ultra thick, dotted, domain=-10:-2.302585092994046, samples=200] {4*pi + asin((101*x-99+101*ln(10))/(20*sinh (x)))/180*pi};
      \addplot[ultra thick, domain=-2.302585092994046:-1.068075, samples=200] {5*pi - asin((101*x-99+101*ln(10))/(20*sinh (x)))/180*pi};
      \addplot[ultra thick, dotted, domain=-10:-2.302585092994046, samples=200] {5*pi - asin((101*x-99+101*ln(10))/(20*sinh (x)))/180*pi};
      \addplot[ultra thick, domain=-2.302585092994046:-1.068075, samples=200] {6*pi + asin((101*x-99+101*ln(10))/(20*sinh (x)))/180*pi};
      \addplot[ultra thick, dotted, domain=-10:-2.302585092994046, samples=200] {6*pi + asin((101*x-99+101*ln(10))/(20*sinh (x)))/180*pi};
       \addplot[ultra thick, domain=-2.302585092994046:-1.068075, samples=200] {7*pi - asin((101*x-99+101*ln(10))/(20*sinh (x)))/180*pi};
       \addplot[ultra thick, dotted, domain=-10:-2.302585092994046, samples=200] {7*pi - asin((101*x-99+101*ln(10))/(20*sinh (x)))/180*pi};

      \node at (axis cs:0.7,pi){$\pi$};
     \node at (axis cs:0.7,2*pi){$2 \pi$};
     \node at (axis cs:0.7,3*pi){$3 \pi$};
     \node at (axis cs:0.7,4*pi){$4 \pi$};
     \node at (axis cs:0.7,5*pi){$5 \pi$};
     \node at (axis cs:0.7,6*pi){$6 \pi$};
     
     \node at (axis cs:-2.302585092994046,-0.8){$-\mu$};
     \node at (axis cs:2.302585092994046,-0.8){$\mu$};
    \end{axis}
\end{tikzpicture}
\end{center}
\caption{\label{fig:PathSteepDesCase2} The path of steepest descent for the second case $0 < y \leq t$ is represented by the solid line.  The equation of this path comes from (\ref{eqnPathSteepDesCase2}). The dotted lines are other curves that come from (\ref{eqnPathSteepDesCase2}) and are not part of the path of steepest descent.}  
\end{figure}
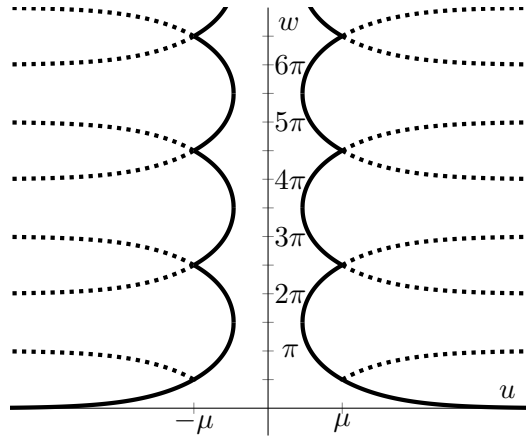

\noindent Notice that this path is the union of two branches $\eL^- \cup \eL^+$, separated by the imaginary axis, where \begin{align*}
 \textrm{ --- }& \eL^- \textrm { runs from } -\infty \textrm { to } 0 \textrm { and from } 0 \textrm{ to } + i\infty, \\  \textrm{ --- }& \eL^+ \textrm { runs from } +i\infty \textrm { to } 0 \textrm { and from } 0 \textrm{ to } + \infty. \end{align*}  What is important about this path is that, on both of the branches, the function $y\phi(R)$ has constant imaginary part, namely  \begin{align*} \chi := \Im(y\phi(R_0^+)):=&y \left(\sinh \mu  - \mu \cosh \mu \right) \\ =& y \sinh \mu - t \cosh^{-1}\left(\frac t y\right)= \sqrt{t^2 - y^2}  - t \cosh^{-1}\left(\frac t y\right), \\   \chi_- := \Im(y\phi(R_0^-)) =& - \chi
  \end{align*} for $\eL^+$ and $\eL^-$, respectively.  
  
Using the Cauchy-Goursat theorem, we can replace the integral along the real axis from (\ref{IntRepKBessel}) with an integral along the path of steepest descent:\begin{align}K_\nu(y) =& \frac 1 2 e^{-i\chi_-} \left(\int_{-\infty}^{-\mu} e^{-y \psi(u)} e^{ru}e^{irw}\left(1+i \frac{\wrt w}{\wrt u}\right) ~\wrt u ~ + \int_{\frac 1 2 \pi}^\infty  e^{-y \psi(u)} e^{ru}e^{irw}\left( \frac{\wrt u}{\wrt w}+i \right) ~\wrt w \right) \\\nonumber +&\frac 1 2 e^{-i\chi} \left(- \int_{\frac 1 2 \pi}^\infty  e^{-y \psi(u)} e^{ru}e^{irw}\left( \frac{\wrt u}{\wrt w}+i \right) ~\wrt w  +\int_{\mu}^{\infty} e^{-y \psi(u)} e^{ru}e^{irw}\left(1+i \frac{\wrt w}{\wrt u}\right) ~\wrt u \right).\end{align}  

Let us first show that we can obtain the integral representation:
\begin{prop}\label{propTemmOscIntRepKBessel1}  We have
 
\begin{align}\label{eqnTemmOscIntRepKBessel} K_\nu(y) =&\frac 1 2 e^{i \chi}  \int_{\mu}^{\infty} e^{-y \psi(u)} e^{-ru}e^{irw}\left(1-i \frac{\wrt w}{\wrt u}\right) ~\wrt u \\ \nonumber +&\frac 1 2 e^{-i\chi}  \int_{\mu}^{\infty} e^{-y \psi(u)} e^{ru}e^{irw}\left(1+i \frac{\wrt w}{\wrt u}\right) ~\wrt u \\\nonumber +&\frac 1 2 e^{i \chi}  \frac{1}{1-e^{-2 \pi t + i 2 \pi r}}\int_{\frac 1 2 \pi}^{\frac 5 2 \pi} e^{-y \psi(u)} e^{-ru}e^{irw}\left(- \frac{\wrt u}{\wrt w}+i\right)~\wrt w
\\ \nonumber -&\frac 1 2 e^{-i\chi}  \frac{1}{1-e^{-2 \pi t + i 2 \pi r}}\int_{\frac 1 2 \pi}^{\frac 5 2 \pi}e^{-y \psi(u)} e^{ru}e^{irw}\left(\frac{\wrt u}{\wrt w} +i\right)~\wrt w
\end{align} where the integrals are only over $\eL^+$.   
\end{prop}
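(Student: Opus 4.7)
The plan is to start from the Cauchy--Goursat representation displayed just before~(\ref{eqnTemmOscIntRepKBessel}), which expresses $K_\nu(y)$ as the sum of two integrals along $\eL^-$ (a horizontal piece over $u\in(-\infty,-\mu)$ and a vertical piece over $w\in(\pi/2,\infty)$ with $u<0$) and two integrals along $\eL^+$ (horizontal over $u\in(\mu,\infty)$, vertical over $w\in(\pi/2,\infty)$ with $u>0$). Since the right-hand side of~(\ref{eqnTemmOscIntRepKBessel}) involves only $\eL^+$, the two $\eL^-$ pieces must be rewritten as integrals along $\eL^+$, and the two resulting vertical integrals on $[\pi/2,\infty)$ must be collapsed onto the single fundamental range $[\pi/2,5\pi/2]$.

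The first step is to apply the reflection $u\mapsto -u$. The defining equation~(\ref{eqnPathSteepDesCase2}) of the steepest-descent path is preserved under this substitution, exchanging the $-$ and $+$ branches, because $u/\sinh u$ is even while $1/\sinh u$ is odd; hence every point $(-u,w)\in\eL^-$ corresponds to a point $(u,w)\in\eL^+$. Since $\psi(u)=\cosh u\cos w+w\cosh\mu$ depends on $u$ only through $\cosh u$, $\psi$ is invariant; by the chain rule $\wrt w/\wrt u$ flips sign; and $e^{ru}\mapsto e^{-ru}$. Applied to the $\eL^-$ horizontal integral this yields exactly the first term of~(\ref{eqnTemmOscIntRepKBessel}); applied to the $\eL^-$ vertical integral it produces the integrand of the third term, still integrated over $w\in[\pi/2,\infty)$.

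The second step is to fold each vertical integral over $[\pi/2,\infty)$ onto $[\pi/2,5\pi/2]$ by periodicity. Because~(\ref{eqnPathSteepDesCase2}) depends on $w$ only through $\sin w$, the function $u(w)$ is $2\pi$-periodic on each branch. Under $w\mapsto w+2\pi$ the integrand transforms as
\begin{align*}
e^{-y\psi(u)}\,e^{\pm ru}\,e^{irw}\;\longmapsto\;e^{-2\pi t+2\pi ir}\,e^{-y\psi(u)}\,e^{\pm ru}\,e^{irw},
\end{align*}
using $t=y\cosh\mu$, while $\wrt u/\wrt w$ is periodic. Since $t>0$ gives $|e^{-2\pi t+2\pi ir}|<1$, the geometric series
\begin{align*}
\int_{\pi/2}^{\infty}=\sum_{k=0}^{\infty}\int_{\pi/2+2k\pi}^{5\pi/2+2k\pi}=\frac{1}{1-e^{-2\pi t+2\pi ir}}\int_{\pi/2}^{5\pi/2}
\end{align*}
converges absolutely, and produces the geometric factor in the third and fourth terms of~(\ref{eqnTemmOscIntRepKBessel}). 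The prefactors $\tfrac12 e^{\pm i\chi}$ come directly from the constancy of $\Im(y\phi)$ along each branch, with value $\pm\chi$ on $\eL^\pm$ and the identity $\chi_-=-\chi$.

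The main obstacle is bookkeeping: tracking the sign flips coming from $\wrt w/\wrt u\mapsto -\wrt w/\wrt u$, the sign flip and limit swap produced by substituting $u\mapsto -u$ on the horizontal piece, and the correct assignment of the prefactors $e^{\pm i\chi}$ to the correct branches. The analytic content — the $u\mapsto -u$ symmetry of~(\ref{eqnPathSteepDesCase2}), the $2\pi$-periodicity of $u(w)$, and absolute convergence of the geometric series — is elementary.
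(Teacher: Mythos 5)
Your proposal is correct and follows essentially the same route as the paper's proof: starting from the Cauchy--Goursat decomposition into $\eL^-\cup\eL^+$, reflecting the $\eL^-$ pieces via $u\mapsto-u$ (using $\psi(u)=\psi(-u)$ and the sign flip of $\wrt w/\wrt u$), and then collapsing the infinite $w$-integrals onto $[\pi/2,5\pi/2]$ via the $2\pi$-periodicity of $u(w)$ and the geometric series in $e^{-2\pi t+2\pi i r}$. No substantive differences from the paper's argument.
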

\begin{rema}
In the special case of purely imaginary order, (\ref{eqnTemmOscIntRepKBessel}) reduces to~\cite[(3.5)]{Tem} and the proof of~(\ref{eqnTemmOscIntRepKBessel}) is similar to that in~\cite{Tem}.
\end{rema}
\begin{proof}

To begin the proof of (\ref{eqnTemmOscIntRepKBessel}), first note that \begin{align*} w(u) &= w(-u) \\ \psi(u) &= \psi(-u).
  \end{align*}  Also, $u(w)$ is two-valued, one for each branch.  Let us, for clarity, temporarily use $u^+(w)$ to denote the value on $\eL^+$ and $u^-(w)$ to denote the value on $\eL^-$, then we have that \[u^+(w) = - u^-(w) \quad \quad \frac{\wrt u^+}{\wrt w} = - \frac{\wrt u^-}{\wrt w}.\]

 Now let us change variables $u \mapsto -u$ on the branch $\eL^-$ so that all integrals will be over only the branch $\eL^+$:   \begin{align*}\int_{-\infty}^{-\mu} e^{-y \psi(u)} e^{ru}e^{irw}\left(1+i \frac{\wrt w}{\wrt u}\right) ~\wrt u =& \int_{\mu}^{\infty} e^{-y \psi(u)} e^{-ru}e^{irw}\left(1-i \frac{\wrt w}{\wrt u}\right) ~\wrt u \\
 \int_{\frac 1 2 \pi}^\infty  e^{-y \psi(u)} e^{ru}e^{irw}\left( \frac{\wrt u}{\wrt w}+i \right) ~\wrt w =&\int_{\frac 1 2 \pi}^\infty  e^{-y \psi(u)} e^{-ru}e^{irw}\left(- \frac{\wrt u}{\wrt w }+i  \right) ~\wrt w.
  \end{align*}
  
We now have two integrals with respect to $\wrt w$, and these are both integrated over a piece of $\eL^+$.  Moreover, these integrals can both be reduced to integrals over a finite interval.  Given \begin{align*}
P(w) := e^{-y \cosh u \cos w+ru}\left( \frac{\wrt u}{\wrt w}+i \right)  \quad \quad
Q(w) := e^{-y \cosh u \cos w-ru}\left(- \frac{\wrt u}{\wrt w}  +i\right),
\end{align*} we have that $P(w) = P(w+2\pi)$ and $Q(w) = Q(w+2\pi)$ because $u(w) = u(w+2\pi)$ holds over the bounds of integration of the integrals with respect to $\wrt w$.  Consequently, using the geometric series (which is valid because $t>0$), we have that \begin{align*}\int_{\frac 1 2 \pi}^\infty P(w) e^{-tw+irw}~\wrt w =& \frac{1}{1-e^{-2 \pi t + i 2 \pi r}}\int_{\frac 1 2 \pi}^{\frac 5 2 \pi} P(w) e^{-tw+irw}~\wrt w \\ \int_{\frac 1 2 \pi}^\infty Q(w) e^{-tw+irw}~\wrt w =& \frac{1}{1-e^{-2 \pi t + i 2 \pi r}}\int_{\frac 1 2 \pi}^{\frac 5 2 \pi} Q(w) e^{-tw+irw}~\wrt w
  \end{align*}  Combining all of this gives (\ref{eqnTemmOscIntRepKBessel}), as desired.

\end{proof}

Now take the piece of $\eL^+$ from $w = \frac 3 2 \pi$ to $w = \frac 5 2 \pi$ (inclusive of the endpoints) and shift it down the vertical axis by $2 \pi$.  Call this shifted piece $\eL^0$.  Note that $\eL^0$ does not meet $\eL^-$ and meets $\eL^+$ only at one point (namely, the saddle point $R_0^+$).  Although $w(u)$ is multiple-valued on $\eL^+ \cup \eL^0$, we note that $w(u)$ is single-valued on $\eL^0$ and on the piece of $\eL^+$ corresponding to $0\leq w \leq3\pi/2$.

Let $\mu_-:=u(-\pi/2) = u(3\pi/2)$.  For the task of computing the dominant behavior, we need to modify the integral representation~(\ref{eqnTemmOscIntRepKBessel}) in two ways.  The first is to shift the integrals from $\frac 3 2 \pi$ to $\frac 5 2 \pi$ by $-2\pi$ (and this piece of the integrals will be over $\eL^0$) and the second is to convert some of the integrals with respect to $w$ to integrals with respect to $u$.  Now we can derive the integral representation that we need for the proof of the second case, namely Theorem~\ref{propTemmOscIntRepKBessel2}.  See Figure~\ref{fig:PathNovelIntRep} for the contour over which the integrals in Theorem~\ref{propTemmOscIntRepKBessel2} are defined.

%
%
%
\begin{proof}[Proof of Theorem~\ref{propTemmOscIntRepKBessel2}]

Changing variables $w \mapsto w - 2 \pi$, we have that \begin{align*}\int_{\frac 3 2 \pi}^{\frac 5 2 \pi} e^{-y \psi(u)} e^{-ru}e^{irw}\left(- \frac{\wrt u}{\wrt w}+i\right)~\wrt w =& e^{-2\pi t +i 2 \pi r} \int_{-\frac {\pi} 2}^{\frac \pi 2} e^{-y \psi(u)} e^{-ru}e^{irw}\left(- \frac{\wrt u}{\wrt w}+i\right)~\wrt w \\
\int_{\frac 3 2 \pi}^{\frac 5 2 \pi}e^{-y \psi(u)} e^{ru}e^{irw}\left(\frac{\wrt u}{\wrt w} +i\right)~\wrt w =& e^{-2\pi t +i 2 \pi r}\int_{-\frac {\pi}2}^{\frac \pi 2}e^{-y \psi(u)} e^{ru}e^{irw}\left(\frac{\wrt u}{\wrt w} +i\right)~\wrt w
  \end{align*}
  
  Now note that $w(u)$ is a bijection on $\eL^0$ and on $\eL^+$, and, thus, on each of these pieces, we may change our integrals with respect to $w$ to integrals with respect to $u$.
  
By calculus, we have that $\mu_-$ is the minimum value of $u$ on $\eL^+ \cup \eL^0$.  In particular, we have that $0<\mu_- < \mu$.  Hence, using the substitution theorem, we have that \begin{align*}\int_{\frac 1 2 \pi}^{\frac 3 2 \pi} e^{-y \psi(u)} e^{-ru}e^{irw}\left(- \frac{\wrt u}{\wrt w}+i\right)~\wrt w =& \int_{\mu_-}^{\mu} e^{-y \psi(u)} e^{-ru}e^{irw}\left( 1-i\frac{\wrt w}{\wrt u}\right)~\wrt u \\
\int_{\frac 1 2 \pi}^{\frac 3 2 \pi}e^{-y \psi(u)} e^{ru}e^{irw}\left(\frac{\wrt u}{\wrt w} +i\right)~\wrt w =& -\int_{\mu_-}^{\mu}e^{-y \psi(u)} e^{ru}e^{irw}\left(1 +i \frac{\wrt w}{\wrt u}\right)~\wrt u.
  \end{align*} 
  
 The desired result now follows.

\end{proof}

\begin{proof}[Proof of Theorem~\ref{thmSecondCaseOscillKBessel}]
 The integral representation in Theorem~\ref{propTemmOscIntRepKBessel2} is particularly suited to the saddle point method.  In each of the integrals from the proposition, we now change variables, letting $\rho :=u - \mu$, and estimate the integrals near $\rho = 0$.

First note that we have \[\sin w = \sin (w(\rho+ \mu))= \frac{\rho \cosh \mu + \sinh \mu}{\sinh \rho \cosh \mu + \cosh \rho \sinh \mu}.\]

Choose $\rho_0 >0$ small enough so we may apply the geometric series in the second equality of (\ref{eqnsinApproxOscilKBessel}).  Let $|\rho|\leq \rho_0.$  Using the Taylor series for $\sinh x$ and $\cosh x$, we have \begin{align}\label{eqnsinApproxOscilKBessel}
 \sin w =&  \frac{\rho \cosh \mu + \sinh \mu}{\sinh \mu \left(1 + \frac{\cosh \mu}{\sinh \mu}\rho + \frac{\rho^2}2 +\frac{\cosh \mu}{\sinh \mu} \frac{\rho^3}{6}+O(\rho^4)\right)} \\ \nonumber= &\left(\rho  \frac{\cosh \mu}{\sinh \mu}+1 \right)\left[ 1- \left(\frac{\cosh \mu}{\sinh \mu}\rho + \frac{\rho^2}2 +\frac{\cosh \mu}{\sinh \mu} \frac{\rho^3}{6}+O(\rho^4) \right) \right. \\\nonumber &\left. + \left(\frac{\cosh \mu}{\sinh \mu}\rho + \frac{\rho^2}2 +\frac{\cosh \mu}{\sinh \mu} \frac{\rho^3}{6}+O(\rho^4) \right)^2  \right. \\ \nonumber&\left.- \left(\frac{\cosh \mu}{\sinh \mu}\rho + \frac{\rho^2}2 +\frac{\cosh \mu}{\sinh \mu} \frac{\rho^3}{6}+O(\rho^4) \right)^3+O(\rho^4)\right]
 \\ \nonumber = &1 - \frac 1 2 \rho^2 + \left(-\frac{\cosh^2 \mu}{6 \sinh^2 \mu} + \frac{\cosh \mu}{2\sinh \mu} \right) \rho^3+O(\rho^4).\end{align}  Note that $\sinh \mu \neq 0$ because $\mu >0$.
 
 Now choose $\rho_1 >0$ small enough so we may apply the Taylor series for the function $\sqrt{1+x}$ in the third equality of (\ref{eqncosApproxOscilKBessel}).  Let us further restrict $\rho$ by requiring that $|\rho|\leq \min(\rho_0, \rho_1)$ holds.  We have \begin{align}\label{eqncosApproxOscilKBessel} \cos w &= \cos (w(\rho+ \mu))= \pm \sqrt{1 - \sin^2 w} \\\nonumber&= \pm|\rho| \sqrt{1 + \left(\frac{\cosh^2 \mu}{3 \sinh^2 \mu} - \frac{\cosh \mu}{\sinh \mu}  \right)\rho +O(\rho^2)} 
 \\\nonumber&= \pm \left(|\rho| +\frac 1 2 \left(\frac{\cosh^2 \mu}{3 \sinh^2 \mu} - \frac{\cosh \mu}{\sinh \mu}  \right)|\rho|\rho +O(\rho^3)\right). 
  \end{align}
  
  Now choose $\rho_2 >0$ small enough so that \begin{align}\label{eqnPosEstOscilKBessel}
 \left(|\rho| +\frac 1 2 \left(\frac{\cosh^2 \mu}{3 \sinh^2 \mu} - \frac{\cosh \mu}{\sinh \mu}  \right)|\rho|\rho +O(\rho^3)\right) \geq0.  \end{align} Here the implied constant is the same as the implied constant in (\ref{eqncosApproxOscilKBessel}).   Let again restrict $\rho$ by requiring that $|\rho|\leq \min(\rho_0, \rho_1, \rho_2)$ holds. 
  
For clarity, let us define the following: \begin{align*}
w^0(u) := &w(u) \textrm{ on }  \eL^0 \\
w^+(u) := &w(u) \textrm{ on }  \eL^+ \textrm{ for } 0 \leq w(u) \leq 3\pi/2 \end{align*} and let \begin{align*}
\psi^0(u) &:= \cosh u \cos w^0 +w^0 \cosh \mu  \\ \psi^+(u) &:= \cosh u \cos w^+ +w^+ \cosh \mu \end{align*} denote $\psi(u)$ on on $\eL^0$ and on $\eL^+$ corresponding to $0\leq w \leq3\pi/2$, respectively.  Note that on $\eL^+$, for small enough $\delta>0$, we have\[  \begin{cases} \cos \left(w^+(\delta + \mu)\right)< 0 \text { if } \delta <0  \\   \cos \left(w^+(\delta + \mu)\right)> 0 \text { if } \delta >0 \end{cases},\] and, thus, we have \[\cos w^+ = \cos (w^+(\rho+ \mu))= \rho +\frac 1 2 \left(\frac{\cosh^2 \mu}{3 \sinh^2 \mu} - \frac{\cosh \mu}{\sinh \mu}  \right)\rho^2 +O(\rho^3).\]
  
  On $\eL^0$, for small enough $\delta>0$, we have $\cos \left(w^0\right)>0$ if $\delta <0$, and, thus, we have \[\cos w^0 = \cos (w^0(\rho+ \mu)) = -\rho -\frac 1 2 \left(\frac{\cosh^2 \mu}{3 \sinh^2 \mu} - \frac{\cosh \mu}{\sinh \mu}  \right)\rho^2 -O(\rho^3).\]  Note that the implied constants in the expressions for $\cos w^+$ and $\cos w^0$ are the same.  Note that $\rho \leq 0$ on $\eL^0$.

  Now choose $\rho_3 >0$ small enough so we may apply the Taylor series for the function $\arccos(x)$ in the expressions for $\cos w^+$ and $\cos w^0$.  Let us again restrict $\rho$ by requiring that $|\rho|\leq \min(\rho_0, \rho_1, \rho_2, \rho_3)$ holds.  
 We have \begin{align}\label{eqnwApproxOscilKBessel} w^+ & = w^+(\rho+ \mu)=\frac \pi 2 -  \rho -\frac 1 2 \left(\frac{\cosh^2 \mu}{3 \sinh^2 \mu} - \frac{\cosh \mu}{\sinh \mu}  \right)\rho^2 +O(\rho^3) \\\nonumber w^0 &=w^0(\rho+ \mu)=\frac \pi 2 +  \rho +\frac 1 2 \left(\frac{\cosh^2 \mu}{3 \sinh^2 \mu} - \frac{\cosh \mu}{\sinh \mu}  \right)\rho^2 +O(\rho^3). 
  \end{align}
  
  Consequently, using the above and the Taylor series for $\cosh(x)$ and $\sinh(x)$, we have that \begin{align*}\psi^+(\rho+\mu) = & \cosh \rho \cosh \mu \cos w^+ + \sinh \rho \sinh \mu \cos w^+ + w^+ \cosh \mu  
  \\ =& \cosh \mu \left( \rho +\frac 1 2 \left(\frac{\cosh^2 \mu}{3 \sinh^2 \mu} - \frac{\cosh \mu}{\sinh \mu}  \right)\rho^2 \right)+ (\sinh \mu) p^2 \\& + \cosh \mu\left( \frac \pi 2 -  \rho -\frac 1 2 \left(\frac{\cosh^2 \mu}{3 \sinh^2 \mu} - \frac{\cosh \mu}{\sinh \mu}  \right)\rho^2\right)+O(\rho^3) 
  \\=& \frac \pi 2 \cosh \mu+(\sinh \mu) p^2 +O(\rho^3)
  \end{align*} and, analogously, \begin{align}\psi^0(\rho+\mu) = \frac \pi 2 \cosh \mu-(\sinh \mu) p^2 +O(\rho^3).
\end{align}

Also, choose $\rho_4 >0$ small enough so that \begin{align}\label{eqnWApproxOscilKBesselBndForA1} \left | -\frac 1 2 \left(\frac{\cosh^2 \mu}{3 \sinh^2 \mu} - \frac{\cosh \mu}{\sinh \mu}  \right)\rho^2 +O(\rho^3) \right| \leq \frac{|\rho|} 2. \end{align}

Finally, we have \begin{align*} \frac{\wrt {}}{\wrt \rho}\left(w^+(\rho +\mu)\right) &= -1 +O(\rho) \\ \frac{\wrt {}}{\wrt \rho}\left(w^0(\rho +\mu)\right) &= 1 +O(\rho).
  \end{align*}

  Let us now consider the integrals in (\ref{eqnTemmOscIntRepKBessel2}).  Applying the change of variables $\rho = u - \mu$, we have that  \begin{align*}
\int_{\mu_-}^{\infty} &e^{-y \psi^+(u)} e^{-ru}e^{irw^+}\left(1-i \frac{\wrt w^+}{\wrt u}\right) ~\wrt u \\ &=\int_{\mu_--\mu}^{\infty} e^{-y \psi^+(\rho+\mu)} e^{-r(\rho+\mu)}e^{irw^+(\rho+\mu)}\left(1-i \frac{\wrt {}}{\wrt \rho}\left(w^+(\rho +\mu)\right)\right) ~\wrt \rho  \end{align*}  

\begin{align*}\int_{\mu_-}^{\infty} &e^{-y \psi^+(u)} e^{ru}e^{irw^+}\left(1+i \frac{\wrt w^+}{\wrt u}\right) ~\wrt u \\ &=\int_{\mu_--\mu}^{\infty} e^{-y \psi^+(\rho+\mu)} e^{r(\rho+\mu)}e^{irw^+(\rho+\mu)}\left(1+i \frac{\wrt {}}{\wrt \rho}\left(w^+(\rho +\mu)\right)\right) ~\wrt \rho  
 \end{align*}

Pick $1/3 < \delta < 1/2$ and let $y_0^{-\delta} = \min(\rho_0, \rho_1, \rho_2, \rho_3, \rho_4, \mu - \mu_-, 1/2)$.  Pick $y_1 >0$ such that \begin{align}\label{eqnLargeCosKBessel}
 \frac{\left(y^{1-\delta} +\frac 1 2 \left(\frac{\cosh^2 \mu}{3 \sinh^2 \mu} - \frac{\cosh \mu}{\sinh \mu}  \right)y^{1-2\delta} +O(y^{1-3\delta})\right)\sinh \mu}{2}-|r| \geq 0 
 \\ \nonumber \left|\frac 1 2 \left(\frac{\cosh^2 \mu}{3 \sinh^2 \mu} - \frac{\cosh \mu}{\sinh \mu}  \right)y^{1-2\delta} +O(y^{1-3\delta}) \right|\leq \frac{y^{1-\delta}}{2}\end{align} hold for all $y \geq  y_1$.  Here, the implied constants are the same as that for $\cos w^+$ above.  We now estimate the integrals in a small neighborhood of the saddle point $R^+_0$ on $\eL^+$ determined by $|\rho| < y^{-\delta}$ where $y \geq \max(y_0, y_1)$.  Using the estimates above, we simplify in manner similar that in Section~\ref{subsecFirstCaseMonoKBessel} to obtain  \begin{align*}
\int_{-y^{-\delta}}^{y^{-\delta}} &e^{-y \psi^+(\rho+\mu)} e^{-r(\rho+\mu)}e^{irw^+(\rho+\mu)}\left(1-i \frac{\wrt {}}{\wrt \rho}\left(w^+(\rho +\mu)\right)\right) ~\wrt \rho  \\ =& e^{-y \frac \pi 2 \cosh \mu -r \mu +i r \frac \pi 2}\left[1+i+O\left(\max(y^{-\delta}, y^{1-3\delta})\right) \right] \int_{-y^{-\delta}}^{y^{-\delta}}  e^{-y (\sinh\mu) \rho^2} ~\wrt \rho  
\\ =& \sqrt{\frac{\pi}{y \sinh \mu}}e^{-y \frac \pi 2 \cosh \mu -r \mu +i r \frac \pi 2}\left[1+i+O\left(\max(y^{-\delta}, y^{1-3\delta})\right) \right]\end{align*}  where in the last equality we have changed variables $\rho^2 \mapsto y (\sinh\mu) \rho^2$ and applied the standard bounds for $\textrm{erfc}(x)$.

Likewise, we have \begin{align*}\int_{-y^{-\delta}}^{y^{-\delta}} &e^{-y \psi^+(\rho+\mu)} e^{r(\rho+\mu)}e^{irw^+(\rho+\mu)}\left(1+i \frac{\wrt {}}{\wrt \rho}\left(w^+(\rho +\mu)\right)\right) ~\wrt \rho \\=& \sqrt{\frac{\pi}{y \sinh \mu}}e^{-y \frac \pi 2 \cosh \mu +r \mu +i r \frac \pi 2}\left[1-i+O\left(\max(y^{-\delta}, y^{1-3\delta})\right) \right].
  \end{align*}

We now compute the dominant term, namely the following:   \begin{align} \label{eqnDomTermOscilCaseKBessel}\frac 1 2 e^{i \chi}&\sqrt{\frac{\pi}{y \sinh \mu}}e^{-y \frac \pi 2 \cosh \mu -r \mu +i r \frac \pi 2}\left(1+i \right) +  \frac 1 2 e^{-i\chi}\sqrt{\frac{\pi}{y \sinh \mu}}e^{-y \frac \pi 2 \cosh \mu +r \mu +i r \frac \pi 2}\left(1-i\right)
\\\nonumber= & \sqrt{\frac{\pi}{y \sinh \mu}}e^{-y \frac \pi 2 \cosh \mu +i r \frac \pi 2} \left(\cosh(i \chi - r \mu) +i \sinh(i \chi - r \mu)\right) 
\\\nonumber= & \sqrt{\frac{\pi}{y \sinh \mu}}e^{-y \frac \pi 2 \cosh \mu +i r \frac \pi 2} \left[\cosh(r \mu)\left( \cos \chi - \sin \chi\right) -i \sinh(r \mu)\left( \cos \chi + \sin \chi\right)\right]
\\\nonumber= & \sqrt{\frac{2\pi}{y \sinh \mu}}e^{-y \frac \pi 2 \cosh \mu +i r \frac \pi 2} \left[\cosh(r \mu) \sin\left(\frac \pi 4 - \chi\right)  -i \sinh(r \mu)\cos\left(\frac \pi 4 - \chi\right)\right].
  \end{align}  Here, the last equality follows from the elementary observation
  \begin{align*}
\cos(\chi) &= \sin\left(\frac \pi 4 + \frac \pi 4 - \chi\right) =  \frac{\sqrt 2} 2 \cos\left(\frac \pi 4 - \chi\right) + \frac{\sqrt 2} 2 \sin\left(\frac \pi 4 - \chi\right) 
\\\sin(\chi) &= \cos\left(\frac \pi 4 + \frac \pi 4 - \chi\right) =  \frac{\sqrt 2} 2 \cos\left(\frac \pi 4 - \chi\right) - \frac{\sqrt 2} 2 \sin\left(\frac \pi 4 - \chi\right).
\end{align*}   

To prove that (\ref{eqnDomTermOscilCaseKBessel}) is the dominant term, we now show that the rest is negligible.  As in Section~\ref{subsecFirstCaseMonoKBessel}, we shall pick a more suitable contour and use the Cauchy-Goursat theorem.  Let us consider the integral \begin{align*}A:=\frac 1 2 e^{-i\chi}\int_{\mu_-}^{\mu - y^{-\delta}} &e^{-y \psi^+(u)} e^{ru}e^{irw^+}\left(1+i \frac{\wrt w^+}{\wrt u}\right) ~\wrt u \\ &=\frac 1 2 e^{-i\chi}\int_{\mu_--\mu}^{- y^{-\delta}} e^{-y \psi^+(\rho+\mu)} e^{r(\rho+\mu)}e^{irw^+(\rho+\mu)}\left(1+i \frac{\wrt {}}{\wrt \rho}\left(w^+(\rho +\mu)\right)\right) ~\wrt \rho  
 \end{align*} and replace the piece of the path of steepest descent that the integral is over with the contours $\ell_1 \cup \ell_2$ where\begin{align*} \ell_1 &:= \left\{\left(\mu -y^{-\delta}- u, w^+(\mu - y^{-\delta})\right):  0\leq u\leq \mu -y^{-\delta}-\mu_-\right\}\\\ell_2 &:= \left\{\left(\mu_-, w^+(\mu - y^{-\delta})+\theta\right):  0\leq \theta \leq 3\pi/2 - w^+(\mu - y^{-\delta})\right\}.
  \end{align*}  The integral over $\ell_1$ is \begin{align*} A_1:=\frac 1 2 \int_{\mu - y^{-\delta} }^{\mu_-} e^{-y \left[\cosh\left(u+iw^+(\mu - y^{-\delta})\right) -i\left(u+iw^+(\mu - y^{-\delta})\right) \cosh \mu \right]} e^{r\left(u+iw^+(\mu - y^{-\delta})\right)} ~\wrt u,  
\end{align*}  where the integrand comes from (\ref{IntRepKBessel3}).  Changing variables, $\rho = u - \mu$, we have  \begin{align*} A_1=\frac {e^{-yw^+(\mu - y^{-\delta}) \cosh \mu+r\mu + irw^+(\mu - y^{-\delta})}} 2 \int_{ -y^{-\delta} }^{\mu_--\mu} &e^{-y \left[\cosh(\rho+\mu) \cos \left(w^+(\mu - y^{-\delta})\right)  \right]}  \\ & \times e^{-y \left[i\sinh(\rho+\mu) \sin \left(w^+(\mu - y^{-\delta})\right)-i (\rho+\mu) \cosh \mu \right]} e^{r\rho} ~\wrt \rho.
\end{align*}  Thus, as $-y^{-\delta} \geq \mu_- - \mu$, we have \begin{align*}
 |A_1| \leq & \frac {e^{-yw^+(\mu - y^{-\delta}) \cosh \mu+r\mu }} 2 \int_{\mu_--\mu}^{ -y^{-\delta} } e^{-y \left(\cosh(\rho+\mu) \cos \left(w^+(\mu - y^{-\delta})\right)  \right)} e^{r\rho} ~\wrt \rho 
 \\ \leq &  \frac {e^{r\mu-y\left[\cosh(\mu - y^{-\delta})  \cos \left(w^+(\mu - y^{-\delta})\right)+w^+(\mu - y^{-\delta}) \cosh \mu\right]}} {2} \int_{\mu_--\mu}^{ -y^{-\delta} }  e^{r\rho} ~\wrt \rho
 \\ \leq &  \frac {e^{r\mu-y \frac \pi 2 \cosh \mu-y\left(y^{-\delta} -\frac 1 2 \left(\frac{\cosh^2 \mu}{3 \sinh^2 \mu} - \frac{\cosh \mu}{\sinh \mu}  \right)y^{-2\delta} +O(y^{-3\delta})\right)  \left(\cosh \mu - \cosh(\mu - y^{-\delta}) \right)}} {2} \int_{\mu_--\mu}^{ -y^{-\delta} }  e^{r\rho} ~\wrt \rho.\end{align*} Here the second inequality follows because $\cos \left(w^+(\mu - y^{-\delta})\right) <0$ and the third from the approximations we computed above and the fact that (\ref{eqnPosEstOscilKBessel}) holds. 
 
 By the mean value theorem, there exists $\mu - y^{-\delta} < \widetilde{\mu}< \mu$ such that $\cosh(\mu - y^{-\delta}) = \cosh \mu  - y^{-\delta}\sinh \widetilde{\mu}$.  Since $0<\mu_-\leq \mu - y^{-\delta}$ holds, we have that $\sinh\widetilde{\mu} >0$.  Consequently, we have the following upper bound: \begin{align*}
|A_1| \leq&  \frac {e^{r\mu-y \frac \pi 2 \cosh \mu-\left(y^{1-2\delta} -\frac 1 2 \left(\frac{\cosh^2 \mu}{3 \sinh^2 \mu} - \frac{\cosh \mu}{\sinh \mu}  \right)y^{1-3\delta} +O(y^{1-4\delta})\right)\sinh\widetilde{\mu}}} {2} \int_{\mu_--\mu}^{ -y^{-\delta} }  e^{r\rho} ~\wrt \rho
\\ \leq & \begin{cases}  \frac 1 2 e^{r\mu-y \frac \pi 2 \cosh \mu- y^{1-2\delta}\sinh\widetilde{\mu} } (1+O(y^{1-3\delta}))O(1 + y^{-\delta}) &\text { if } r \neq 0 \\  \frac 1 2 e^{-y \frac \pi 2 \cosh \mu- y^{1-2\delta}\sinh\widetilde{\mu}}(1+O(y^{1-3\delta}))&\text { if } r = 0\end{cases} \end{align*}  As $1-2\delta >0$, this shows that $A_1$ is negligible compared to the dominant term, as desired.


The integral over $\ell_2$ is \begin{align*} A_2:=\frac i 2 \int_{w^+(\mu - y^{-\delta})}^{\frac 3 2 \pi} e^{-y \left[\cosh\left(\mu_- +i \theta\right) -i\left(\mu_- +i \theta \right) \cosh \mu \right]} e^{r\left(\mu_- +i \theta\right)} ~\wrt \theta.  
\end{align*}  Using (\ref{eqnwApproxOscilKBessel}, \ref{eqnWApproxOscilKBesselBndForA1}), we have \begin{align*} |A_2| \leq & \frac 1 2 e^{r (\mu_-)} \int_{\frac \pi 2+ \frac{y^{-\delta}}{2}}^{\frac 3 2 \pi} e^{-y\left(\cosh (\mu_-) \cos \theta + \theta \cosh \mu \right)}~\wrt \theta
\\ \leq &\frac 1 2 e^{r (\mu_-)} \int_{\frac \pi 2+ \frac{y^{-\delta}}{2}}^{\frac 3 2 \pi} e^{-y\left(\cosh (\mu_-) \left(\frac \pi 2 - \theta \right) + \theta \cosh \mu \right)}~\wrt \theta
\\ = &\frac 1 2 e^{r (\mu_-)-y\cosh (\mu_-)\frac \pi 2 } \int_{\frac \pi 2+ \frac{y^{-\delta}}{2}}^{\frac 3 2 \pi} e^{- y\theta\left(\cosh \mu - \cosh (\mu_-) \right) }~\wrt \theta 
\\ = & \frac {e^{r (\mu_-)-y\frac \pi 2 \cosh \mu-  \frac{y^{1-\delta}}{2} \left( \cosh \mu - \cosh (\mu_-) \right)}} {2y \left( \cosh \mu - \cosh (\mu_-) \right)}\left(1 - e^{\left(-y \pi +\frac{y^{1-\delta}}{2}\right)\left(\cosh \mu - \cosh (\mu_-) \right)} \right),
  \end{align*}  where the second inequality follows from the fact that $\pi/2 - \theta \leq \cos \theta$ for all $\theta \geq \pi/2$.  This shows that $A_2$ and, thus, $A$ are negligible compared to the dominant term, as desired.
 
 Let us consider the integral \begin{align*} B:=  \frac 1 2 e^{-i\chi} \int_{-\frac {\pi}2}^{\frac 3 2 \pi }e^{-y \psi(u)} e^{ru}e^{irw}\left(\frac{\wrt u}{\wrt w} +i\right)~\wrt w
  \end{align*} and replace the piece of the path of steepest descent with the contour \[\ell_3:= \left\{(\mu_-, \theta):  -\frac\pi 2 \leq \theta \leq \frac 3 2\pi \right\}\] to obtain \[B =  \frac i 2  \int_{-\frac {\pi}2}^{\frac 3 2 \pi }e^{-y \left[\cosh\left(\mu_- +i \theta\right) -i\left(\mu_- +i \theta \right) \cosh \mu \right]} e^{r\left(\mu_- +i \theta\right)} ~\wrt \theta\] where the integrand comes from (\ref{IntRepKBessel3}). Thus, we have \begin{align*} |B| \leq & \frac 1 2 e^{r (\mu_-)+ y \cosh(\mu_-)} \int_{-\frac \pi 2}^{\frac 3 2 \pi} e^{-y\theta \cosh \mu}~\wrt \theta = \frac{e^{r (\mu_-)+ y \cosh(\mu_-)}}{2 y \cosh \mu}\left(e^{y \frac \pi 2 \cosh \mu} -  e^{-y \frac {3\pi} 2 \cosh \mu}\right)
  \end{align*} and that \begin{align*} \left|\frac{e^{-2 \pi t + i 2 \pi r}}{1-e^{-2 \pi t + i 2 \pi r}}B\right| \leq \frac{e^{r (\mu_-)+ y \cosh(\mu_-)}}{ y \cosh \mu}\left(e^{-y \frac {3\pi} 2 \cosh \mu} -  e^{-y \frac {7\pi} 2 \cosh \mu}\right)
  \end{align*}  is negligible compared to the dominant term, as desired.
  
Let $U_0 = \mu+1/2$.  Let us now consider the integral \begin{align*}C:=\frac 1 2 e^{-i\chi}\int_{\mu +y^{-\delta}}^{\infty}&e^{-y \psi^+(u)} e^{ru}e^{irw^+}\left(1+i \frac{\wrt w^+}{\wrt u}\right) ~\wrt u \\ &=\frac 1 2 e^{-i\chi}\int_{ y^{-\delta}}^\infty e^{-y \psi^+(\rho+\mu)} e^{r(\rho+\mu)}e^{irw^+(\rho+\mu)}\left(1+i \frac{\wrt {}}{\wrt \rho}\left(w^+(\rho +\mu)\right)\right) ~\wrt \rho  
 \end{align*} and replace the piece of the path of steepest descent that the integral is over with the family of contours \begin{align*}
\ell_4:=\ell_4(U):=&\{(u, w^+(\mu +y^{-\delta})): \mu +y^{-\delta} \leq u\leq U \} 
\\ \ell_5:=\ell_5(U):=&\{(U, w^+(\mu +y^{-\delta})-\theta): 0 \leq \theta  \leq w^+(\mu +y^{-\delta})-w^+(U) \}
 \end{align*} for any $U \geq U_0$. The integral over $\ell_4$ is \begin{align*}C_1:= C_1(U):=\frac 1 2 \int_{\mu + y^{-\delta} }^{U} e^{-y \left[\cosh\left(u+iw^+(\mu + y^{-\delta})\right) -i\left(u+iw^+(\mu + y^{-\delta})\right) \cosh \mu \right]} e^{r\left(u+iw^+(\mu + y^{-\delta})\right)} ~\wrt u,  
\end{align*}  where the integrand comes from (\ref{IntRepKBessel3}).

Changing variables, $\rho = u - \mu$, we have, for all $U \geq U_0$, \begin{align*}
 |C_1| \leq & \frac {e^{-yw^+(\mu + y^{-\delta}) \cosh \mu+r\mu }} 2 \int_{y^{-\delta} }^\infty e^{-y \cosh(\rho+\mu) \cos \left(w^+(\mu + y^{-\delta})\right)} e^{|r|\rho} ~\wrt \rho
\\  \leq & \frac {e^{-yw^+(\mu + y^{-\delta}) \cosh \mu+r\mu }} 2 \int_{y^{-\delta} }^\infty e^{-y \left(\cosh \rho \cosh \mu + \sinh \rho \sinh \mu \right) \cos \left(w^+(\mu + y^{-\delta})  \right)} e^{|r|\rho} ~\wrt \rho 
\\  \leq & \frac {e^{-yw^+(\mu + y^{-\delta}) \cosh \mu - y \cosh \mu \cos \left(w^+(\mu + y^{-\delta})  \right) +r\mu }} 2 \int_{y^{-\delta} }^\infty e^{-y \rho \sinh \mu  \cos \left(w^+(\mu + y^{-\delta})  \right)} e^{|r|\rho} ~\wrt \rho
\\  \leq & \frac {e^{-y \frac \pi 2 \cosh \mu +O(y^{1-3\delta)} +r\mu }} 2 \int_{y^{-\delta} }^\infty e^{-\rho\left(y^{1-\delta} +\frac 1 2 \left(\frac{\cosh^2 \mu}{3 \sinh^2 \mu} - \frac{\cosh \mu}{\sinh \mu}  \right)y^{1-2\delta} +O(y^{1-3\delta})\right)\sinh \mu} e^{|r|\rho} ~\wrt \rho 
\\  \leq & \frac {e^{-y \frac \pi 2 \cosh \mu +O(y^{1-3\delta)} +r\mu }} 2 \int_{y^{-\delta} }^\infty e^{-\frac 1 2 \rho\left(y^{1-\delta} +\frac 1 2 \left(\frac{\cosh^2 \mu}{3 \sinh^2 \mu} - \frac{\cosh \mu}{\sinh \mu}  \right)y^{1-2\delta} +O(y^{1-3\delta})\right)\sinh \mu} ~\wrt \rho
\\  \leq & \frac {4 e^{-y \frac \pi 2 \cosh \mu - \frac 1 4 y^{1-2 \delta} \sinh \mu +r\mu }} {y^{1-\delta} \sinh \mu}  \left(1+O(y^{1-3\delta)}\right)
 \end{align*} where the third inequality follows because $0 < w^+(\mu + y^{-\delta}) < \pi/2$, $\cosh \rho \geq 1$, and $\sinh \rho \geq \rho$ for $\rho >0$, where the fourth inequality follows from our approximations above, and where the fifth and sixth inequalities follow from (\ref{eqnLargeCosKBessel}).  This shows the integral over $C_1(U)$ is negligible compared to the dominant term for all $U\geq U_0$, as desired.
 
 The integral over $\ell_5$ is  \begin{align*}C_2:= C_2(U):=\frac i 2 \int_{w^+(\mu +y^{-\delta}) }^{w^+(U)} e^{-y \left[\cosh\left(U+i\theta\right) -i\left(U+i\theta\right) \cosh \mu \right]} e^{r\left(U+i\theta\right)} ~\wrt \theta,  
\end{align*}  where the integrand comes from (\ref{IntRepKBessel3}).  We have \begin{align*}|C_2|  \leq & \frac 1 2 e^{r U} \int_{0}^{w^+(\mu +y^{-\delta}) } e^{-y\left(\cosh U \cos \theta + \theta \cosh \mu \right)}~\wrt \theta
\\ \leq &\frac 1 2 e^{r U}w^+(\mu +y^{-\delta})e^{-y\cosh U \cos \left( w^+(\mu +y^{-\delta})\right)}
\\ = & \frac 1 2 e^{r U}\left(\frac \pi 2 - O(y^{-\delta}) \right)e^{-\cosh U\left( y^{1-\delta} +\frac 1 2 \left(\frac{\cosh^2 \mu}{3 \sinh^2 \mu} - \frac{\cosh \mu}{\sinh \mu}  \right)y^{1-2\delta} +O(y^{1-3\delta})\right)}
  \end{align*} where the second inequality follows from the observation that $0<\cos \left( w^+(\mu +y^{-\delta})\right) \leq \cos \theta$ for all $\theta \in [0, w^+(\mu +y^{-\delta})]$.  This shows that the integral over $C_2(U)$ is negligible compared to the dominant term for all large enough $U$, as desired.  Consequently, the integral over $C$ is negligible compared to the dominant term, as desired.
  
For the remaining integrals, we note that we have changed variables $u \mapsto -u$ the integrals over $\eL^-$ to arrive at the integral representation in Proposition~\ref{propTemmOscIntRepKBessel1}.  We have already estimated above the integral around $(-\mu, \pi/2)$ along $\eL^-$.  Using the Cauchy-Goursat theorem, we now replace the remaining with integrals along the following contours \begin{align*} \ell_1^- &:= \left\{\left(-\mu +y^{-\delta}+ u, w(-\mu + y^{-\delta})\right):  0\leq u\leq \mu -y^{-\delta}-\mu_-\right\}
\\\ell_2^- &:= \left\{\left(-\mu_-, w(-\mu + y^{-\delta})+\theta\right):  0\leq \theta \leq 3\pi/2 - w(-\mu + y^{-\delta})\right\}
\\\ell_3^-&:= \left\{(-\mu_-, \theta):  -\frac\pi 2 \leq \theta \leq \frac 3 2\pi \right\}
\\\ell_4^-&:=\ell_4^-(U):=\{(u, w(-\mu -y^{-\delta})): -U \leq u \leq -\mu -y^{-\delta}  \} 
\\ \ell_5^-&:=\ell_5^-(U):=\{(-U, w(-U)-\theta): 0 \leq \theta  \leq -w(-\mu -y^{-\delta})+w(-U) \}.  
  \end{align*}  The integrals over these contours are, respectively \begin{align*}
  A^-_1&:=\frac 1 2 \int_{-\mu + y^{-\delta} }^{-\mu_-} e^{-y \left[\cosh\left(u+iw(-\mu + y^{-\delta})\right) -i\left(u+iw(-\mu + y^{-\delta})\right) \cosh \mu \right]} e^{r\left(u+iw(-\mu + y^{-\delta})\right)} ~\wrt u
  \\A^-_2&:=\frac i 2 \int_{w(-\mu + y^{-\delta})}^{\frac 3 2 \pi} e^{-y \left[\cosh\left(-\mu_- +i \theta\right) -i\left(-\mu_- +i \theta \right) \cosh \mu \right]} e^{r\left(-\mu_- +i \theta\right)} ~\wrt \theta
\\B^- &:=  \frac i 2  \int_{-\frac {\pi}2}^{\frac 3 2 \pi }e^{-y \left[\cosh\left(-\mu_- +i \theta\right) -i\left(-\mu_- +i \theta \right) \cosh \mu \right]} e^{r\left(-\mu_- +i \theta\right)} ~\wrt \theta
\\C^-_1&:= C^-_1(U):=\frac 1 2 \int_{-U}^{-\mu - y^{-\delta} } e^{-y \left[\cosh\left(u+iw(-\mu - y^{-\delta})\right) -i\left(u+iw(-\mu - y^{-\delta})\right) \cosh \mu \right]} e^{r\left(u+iw(-\mu - y^{-\delta})\right)} ~\wrt u
\\C^-_2&:= C^-_2(U):=\frac i 2 \int_{w(-U)}^{w(-\mu -y^{-\delta}) } e^{-y \left[\cosh\left(-U+i\theta\right) -i\left(-U+i\theta\right) \cosh \mu \right]} e^{r\left(-U+i\theta\right)} ~\wrt \theta
  \end{align*} where each of the integrands comes from (\ref{IntRepKBessel3}).  Changing variables $u \mapsto -u$, using the observations that $w(x)$ and $\cosh x$ are both even functions, and applying the analogous proofs we used to show that $A_1, A_2, B, C_1, C_2$, respectively, are negligible compared to the dominant term, we have that $A^-_1, A^-_2, B^-, C^-_1, C^-_2$, respectively, are negligible compared to the dominant term, as desired.  This proves the theorem.

\end{proof}

\end{document}